\theoremstyle{Definition}
\numberwithin{equation}{section}
\newtheorem{theorem}{Theorem}
\newtheorem{corollary}{Corollary}
\newtheorem{lemma}{Lemma}
\newtheorem{remark}{Remark}
\newtheorem{definition}{Definition}
\newtheorem{example}{Example}
\def\para{\vspace{1.5mm}}
\def\gcd{{\rm gcd}}
\def\lcm{{\rm lcm}}
\def\Resultant{{\rm Res}}
\def\ord{{\rm ord}}
\def\deg{{\rm deg}}
\def\deg{{\rm deg}}
\def\ox{\,{\overline x}\,}
\def\resultant{{\rm resultant}}
\begin{document}
\begin{frontmatter}
 \title{Infinity Branches and Asymptotic Analysis of Algebraic Space Curves: New Techniques and Applications}
\author[alcalar]{Sonia P\'erez-D\'iaz}
	\ead{sonia.perez@uah.es}
\author[ucas]{Li-Yong Shen\corref{cor}}
	\ead{lyshen@ucas.ac.cn}
\author[ucas]{Xin-Yu Wang}
	\ead{wangxinyu205@mails.ucas.ac.cn}
\author[valen]{R. Magdalena-Benedicto }
	\ead{rafael.magdalena@uv.es}
\address[alcalar]{Departamento de F\'{\i}sica y Matem\'aticas, Universidad de Alcal\'a, E-28871 Madrid, Spain}
\address[ucas]{School of Mathematical Sciences, University of Chinese Academy of Sciences,100049, Beijing, China}
\address[valen]{ Department of Electronic Engineering, Universidad de Valencia, 46100  Valencia, Spain}

\begin{abstract}

Let \( \mathcal{C} \) represent an irreducible algebraic space curve defined by the real polynomials \( f_i(x_1, x_2, x_3) \) for \( i = 1, 2 \). It is a recognized fact that a birational relationship invariably exists between the points on \( \mathcal{C} \) and those on an associated irreducible plane curve, denoted as \( \mathcal{C}^p \). In this work, we leverage this established relationship to delineate the asymptotic behavior of \( \mathcal{C} \) by examining the asymptotes of \( \mathcal{C}_p \). Building on this foundation, we introduce a novel and practical algorithm designed to efficiently compute the asymptotes of~\( \mathcal{C} \), given that the asymptotes of \( \mathcal{C}_p \) have been ascertained.

\end{abstract}

\begin{keyword}
Algebraic Space Curve;  Implicit Representation; Perfect Curves; Infinity Branches;  Asymptotes;
\end{keyword}

\end{frontmatter}

\section{Introduction}
In \cite{paper1}, we introduced the concepts of {\it infinity branches} and {\it approaching curves}. Intuitively speaking, {\it the infinity branches} reflect the status of a given curve
at the points with ``sufficiently large coordinates''. From this notion, we get the concept of {\it convergent branches} and {\it  approaching curves}. More precisely, we say that {\it two infinity branches converge} 
if they get closer as they tend to infinity and {\it  a curve  $\overline{\cal C}$ approaches ${\cal C}$} at its
infinity branch $B$ if $\overline{\cal C}$ has an infinity branch convergent with $B$.  In fact, if $\overline{\cal C}$ approaches ${\cal C}$ at all of its infinity
branches and reciprocally, we say that both curves have the same {\it asymptotic behavior}. 
 \para 
 
From these notions, important properties were derived and in fact, these concepts led to the development of an algorithm for comparing the behavior of two implicitly defined algebraic plane curves at infinity. Specifically, we characterized the finiteness of the {\it Hausdorff distance} between two algebraic curves in $n$-dimensional space, which is related to the asymptotic behavior of the curves (see \cite{paper1}  and \cite{Hausdorff}).

\para

Building upon the concepts and results in \cite{paper1}, in \cite{BlascoPerezII}, we addressed the computation of {\it asymptotes} for the infinity branches of a given implicitly defined algebraic curve $\mathcal{C}$. The {\it asymptotes of an infinity branch} of $\mathcal{C}$ describe the branch's behavior at points with sufficiently large coordinates. While traditionally asymptotes of a curve are lines where the distance between the curve and the line approaches zero at infinity, we demonstrated that an algebraic plane curve may have more general curves, rather than just lines, describing its behavior at infinity. Consequently, in \cite{BlascoPerezII}, we developed an algorithm for computing {\it generalized asymptotes} (or {\it g-asymptotes}) using Puiseux series, and we presented important properties related to this new concept. Some improvements of this algorithm for the implicit case of a plane curve is presented in \cite{NewImpl-As}, and also the parametric case is studied in \cite{NewPara-As}.

\para

The applicability of these results  is crucial in computer-aided geometric design (CAGD); see e.g. \cite{farin},  \cite{farin2},  \cite{hoffmann}, or  \cite{hoschek}. These results provide new concepts and computational techniques that yield insights into the behavior of algebraic curves at infinity. For example, the infinity branches of an implicit plane curve $\mathcal{C}$ are essential for studying the topology of $\mathcal{C}$ (\cite{Gao},\cite{lalo}, \cite{Hong}), graph sketching  or even for detecting its symmetries (see e.g. \cite{Juangetopologia}, \cite{Juangesimetrias}). Additionally, the results obtained play a significant role in approximate parametrization problems or in analyzing the {\it Hausdorff distance} between two curves, which is important for measuring the closeness between them (\cite{Bai}, \cite{Bizzarri}, \cite{Chen}, \cite{PSS1}, \cite{PRSS}). In general, CAGD serves as a natural environment for practical applications involving algebraic curves and surfaces. Particularly, the results and methods presented in this paper pave the way for new avenues in studying the behavior of algebraic space curves, with anticipated extensions to higher dimensions and the case of surfaces (\cite{Surf-As}).

\para

Additionally, the investigation of intersection curves between two surfaces is a cornerstone challenge extensively explored in CAGD (see \cite{Bar} and \cite{hoschek}), finding broad application in CAD/CAM and manufacturing. Diverse methods for computing these curves, falling into numerical and algebraic categories (see    \cite{Heo} and \cite{LiyongChenJia}), have been developed. However, several early studies, such as \cite{Tu}, have explored the intersection curves of two quadrics in the spaces ${\Bbb P}{\Bbb C}^3$ or ${\Bbb P}{\Bbb R}^3$, where the behavior of the space curve at infinity must be considered. Nevertheless, existing approaches still do not fully address the challenges posed by infinite branches within intersections.  Analyzing infinite branches poses a unique challenge, distinct from cases involving bounded intersections, and demands a more sophisticated approach. This prompts us to delve deeper into understanding infinite intersection curves. This paper is expected to helpful in  tackling critical yet underexplored aspect of the infinite part of surface intersection curves.

\para

Motivated by the results and applications mentioned above, we sought to generalize the foundations and methods presented in \cite{BlascoPerezII} and \cite{NewImpl-As} to the case of space curves. More precisely, in this paper, we consider an irreducible real algebraic space curve $\mathcal{C}$ implicitly defined by two irreducible real polynomials over the complex numbers field $\mathbb{C}$. We address the problem of computing the asymptotes of the infinity branches of $\mathcal{C}$ in the most efficient way. For this purpose, we show that the asymptotes of $\mathcal{C}$ can be obtained from the asymptotes of ${\mathcal{C}_p}$, where ${\mathcal{C}_p}$ is a planar curve birationally equivalent to the given spatial curve $\mathcal{C}$. Thus, the problem reduces to demonstrating the equivalence between the asymptotes of these curves and providing an effective algorithm for computing the asymptotes of $\mathcal{C}$ once the asymptotes of ${\mathcal{C}_p}$ are determined. It is easy to note that this approach can be easily extended to  curves in $n$-dimensional space.

\para

The structure of the paper unfolds as follows: in Section \ref{S-notation}, we introduce the notation and  some previous results as needed.
In Section \ref{S-computationasymptote}, we introduce the concepts of perfect curve and generalized asymptote (or g-asymptote).
Finally, we conclude the paper in Section \ref{Sect4}, summarizing the obtained results, highlighting the new contributions of this paper, and proposing topics for further study.

 \section{Notation and terminology}\label{S-notation}

 In this section, we introduce key concepts and terminology essential for understanding the subsequent content of the paper. We start by reviewing previous findings related to Puiseux series (\cite{Alon92}, \cite{paper1}, \cite{Duval89}, \cite{SWP},  and \cite{Walker}), and {\it infinity branches} and {\it approaching curves} (these notions are originally proposed in \cite{paper1}). Subsequently, we introduce important results and tools derived from these notions.

\para

Let ${\Bbb C}[[t]]$ denote the domain of formal power series in the indeterminate $t$ with coefficients in the field ${\Bbb C}$, represented as the set of all sums of the form $\sum_{i=0}^\infty a_it^i$, where $a_i \in {\Bbb C}$. The field of formal Laurent series, denoted as ${\Bbb C}((t))$, is the quotient field of ${\Bbb C}[[t]]$. It's well established that every nonzero formal Laurent series $A \in {\Bbb C}((t))$ can be expressed in the form $A(t) = t^k\cdot(a_0+a_1t+a_2t^2+\cdots)$, where $a_0 \neq 0$ and $k\in \Bbb Z$. Moreover, the field ${{\Bbb C}\ll t\gg}:= \bigcup_{n=1}^\infty {\Bbb C}((t^{1/n}))$ is termed the field of formal Puiseux series. Puiseux series are power series of the form $\varphi(t)=m+a_1t^{N_1/N}+a_2t^{N_2/N}+a_3t^{N_3/N} + \cdots\in{\Bbb C}\ll t\gg$, where $a_i\neq0$ for all $i\in {\Bbb N}$, $N, N_i\in {\Bbb N}$ with $i\geq 1$, and $0<N_1<N_2<\cdots$. The natural number $N$ is referred to as the ramification index of the series, denoted as $\nu(\varphi)$ (see \cite{Duval89}). The order of a nonzero Puiseux or Laurent series $\varphi$ is defined as the smallest exponent of a term with a non-vanishing coefficient in $\varphi$, denoted as $\ord(\varphi)$.

A fundamental property of Puiseux series is encapsulated in Puiseux's Theorem, which asserts that if $\Bbb K$ is an algebraically closed field, then the field ${\Bbb K}\ll x\gg$ is likewise algebraically closed (refer to Theorems 2.77 and 2.78 in \cite{SWP}). A constructive proof of Puiseux's Theorem is offered through the Newton Polygon Method (see, for instance, Section 2.5 in \cite{SWP}).

\para

In the following, we introduce the concept of the infinity branch of a space curve, which constitutes a crucial tool for the subsequent developments in this paper. Consider an irreducible space curve ${\cal C}\subset {\Bbb C}^3$ defined by two polynomials $f_i(x_1,x_2,x_3) \in {\Bbb R}[x_1,x_2,x_3]$ for $i=1,2$.  That is, in  this paper real algebraic space curves are considered implicitly defined as the intersection of two surfaces.

\para

The infinity branches of a curve intuitively represent the regions of the curve extending to infinity, corresponding to the infinity places of the corresponding projective curve (see Section 2.5 in \cite{SWP}). In \cite{paper1} (Section 3), we define these branches for a given plane curve $\cal C$ as sets of the form $B=\{(z,r_j(z))\in {\Bbb C}^2: \,z\in {\Bbb C},\,|z|>M\}\subset{\cal C},\,M\in {\Bbb R}^+$, where $r_{j}(z)$ for $j=1,\ldots,N$ are Puiseux series (\cite{Duval89}). Specifically, $f(z,r_j(z))=0$ for sufficiently large values of $z$, where $f(x_1, x_2)$ denotes the polynomial  defining  $\cal C$.

\para

In the following, we extend the concept of infinity branches to  space curves and we provide a mathematical framework for these entities.  Additionally, we  see how there is actually a one-to-one correspondence between the branches of a spatial curve and a plane curve. This idea is of paramount importance because it allows us to reduce the problem by lowering the dimension and thus generalize the problem we are dealing with, to higher dimensions.

\para

It's worth noting that our work is conducted over $\Bbb C$, although we assume that the curve possesses infinitely many points in the affine plane over $\Bbb R$. Consequently, $\cal C$ is endowed with real defining polynomials (see  Chapter 7 in \cite{SWP}). This assumption of reality is intrinsic to the problem at hand, but the theory developed in this paper can be similarly applied to the case of complex non-real curves.

\para

Let ${\cal C}^*$ denote the corresponding projective curve defined by the homogeneous polynomials $F_i(\ox) \in {\Bbb R}[\ox],\,\ox=(x_1,x_2,x_3,x_4)$ for $i=1,2$. Moreover, consider a point $P=(1:m_2:m_3:0)$, where $m_2,m_3\in {\Bbb C}$, lying at infinity on ${\cal C}^*$. Additionally, we examine the curve implicitly defined by the polynomials $g_i(x_2,x_3,x_4):=F_i(1,x_2,x_3,x_4)\in {\Bbb R}[x_2,x_3,x_4]$ for $i=1,2$. Notably, $g_i(p)=0$ where $p=(m_2,m_3,0)$. Let $I\in {\Bbb R}(x_4)[x_2,x_3]$ represent the ideal generated by $g_i(x_2,x_3,x_4)$ for $i=1,2$ in the ring ${\Bbb R}(x_4)[x_2,x_3]$. As ${\cal C}$ is not contained in a hyperplane $x_4 =c$ for $c\in {\Bbb C}$, we infer that $x_4$ is not algebraic over $\Bbb R$. Under this assumption, the ideal $I$ (i.e., the system of equations $g_1 =  g_2 = 0$) has only finitely many solutions in the three-dimensional affine space over the algebraic closure of ${\Bbb R}(x_4)$ (which is contained in ${{\Bbb C}\ll x_4\gg}$). Hence, there exist finitely many pairs of Puiseux series $(\varphi_{2}(t),\varphi_{3}(t)) \in {{\Bbb C}\ll t\gg}^2$ such that $g_i(\varphi_{2}(t),\varphi_{3}(t),t) = 0$ for $i=1,2$, and $\varphi_{k}(0)=m_k$ for $k=2,3$. Each pair $(\varphi_{2}(t),\varphi_{3}(t))$ serves as a solution of the system associated with the infinity point $(1:m_2:m_3:0)$, and $\varphi_{2}(t)$ and $\varphi_{3}(t)$ converge in a neighborhood of $t= 0$. Furthermore, since $\varphi_{k}(0)=m_k$ for $k=2,3$, these series lack terms with negative exponents; specifically, they take the form
$$\varphi_{k}(t)=m_k+\sum_{i\geq 1}a_{i,k} t^{N_{i,k}/N_k},$$
where $N_k,N_{i,k}\in {\Bbb N}$, $0<N_{1,k}<N_{2,k}<\cdots$.

\para

It's noteworthy that if $\varphi(t):=(\varphi_{2}(t),\varphi_{3}(t))$ constitutes a solution of the system, then $\sigma_{\epsilon}(\varphi)(t):=(\sigma_{\epsilon}(\varphi_{2})(t),\sigma_{\epsilon}(\varphi_{3})(t))$ represents another solution of the system. Here,
$\sigma_{\epsilon}(\varphi_{k})(t)=m_k+\sum_{i\geq 1}a_{i,k}\epsilon^{\lambda_{i,k}} t^{N_{i,k}/N_k},$
$N:=\lcm(N_2,N_3)$, $\lambda_{i,k}:=N_{i,k}N/N_k\in {\Bbb N}$, and $\epsilon^N=1$ (see Section 5.2 in \cite{Alon92}). These solutions are referred to as the conjugates of $\varphi$. The set of all distinct conjugates of $\varphi$ is termed the conjugacy class of $\varphi$, and the number of different conjugates of $\varphi$ is $N=\nu(\varphi)$. In the following, let $\varphi$ be any representant of these series.

\para

Under these conditions and following the reasoning in \cite{paper1} (see Section 3), we establish that there exists $M \in {\Bbb R}^+$ such that for $i\in \{1,2\}$,
$$F_i(1:\varphi_{2}(t):\varphi_{3}(t):t)=g_i(\varphi_{2}(t),\varphi_{3}(t),t)=0,\quad \mbox{for $t\in {\Bbb C}$ and $|t|<M$}.$$
This implies that $F_i(t^{-1}:t^{-1}\varphi_{2}(t):t^{-1}\varphi_{3}(t):1)=f_i(t^{-1},t^{-1}\varphi_{2}(t),t^{-1}\varphi_{3}(t))=0,$
for $t\in {\Bbb C}$ and $0<|t|<M$.

\para

Now, setting $t^{-1}=z$, we deduce that for $i\in \{1,2\}$,
$$f_i(z,r_{2}(z),r_{3}(z))=0,\quad \mbox{for $z\in {\Bbb C}$ and $|z|>M^{-1}$, where}$$
\begin{equation}\label{Eq-conjugates}
r_{k}(z)=z\varphi_{k}(z^{-1})=m_kz+a_{1,k}z^{1-N_{1,k}/N_k}+a_{2,k}z^{1-N_{2,k}/N_k}+a_{3,k}z^{1-N_{3,k}/N_k} + \cdots,
\end{equation}
$a_{j,k}\neq 0$, $N_k,N_{j,k}\in {\Bbb N}$, $j=1,\ldots$, and $0<N_{1,k}<N_{2,k}<\cdots$.

\para

  With this, we introduce the concept of infinity branches. The subsequent definitions and results generalize those presented in Sections 3 and 4 in \cite{paper1} for algebraic plane curves.

\para

\begin{definition}\label{D-infinitybranch}
An infinity branch of a space curve ${\cal C}$ associated with the point at infinity $P=(1:m_2:m_3:0)$,  $m_2, m_3 \in {\Bbb C}$, is a set $ B=\{(z,r_{2}(z),r_{3}(z))\in {\Bbb C}^3: \,z\in {\Bbb C},\,|z|>M\}$, $M\in {\Bbb R}^+$, and the series $r_{2}$ and $r_{3}$ are given by (\ref{Eq-conjugates}).\\
\end{definition}

\begin{remark}\label{R-definfinitybranch}
Using Definition \ref{D-infinitybranch}, we observe that the points of the infinity branch have the form $(z,r_{2}(z),r_{3}(z))$, and by construction, these points belong to the curve, implying that $f_i(z,r_{2}(z),r_{3}(z))=0,\,i=1,2,$ for every $z\in {\Bbb C}$ with $|z|>M$. Additionally, from (\ref{Eq-conjugates}) and considering that $1-N_{i,k}/N_k<1,\,i=1,\ldots$, we deduce that $\lim_{z\rightarrow\infty}r_{k}(z)/z=m_k$ for $k=2,3$. In other words, $\lim_{z\rightarrow\infty}(1:r_{2}(z)/z:r_{3}(z)/z:1/z)=(1:m_2:m_3:0)$.
\end{remark}

\para
In the following, we assume without loss of generality that the given algebraic space curve ${\cal C}$ only has points at infinity of the form $(1 : m_2 : m_3 : 0)$ (otherwise, one consider a linear change of coordinates). More details on the  branches corresponding to different points of infinity are given in \cite{paper1}.

\para
Now, we introduce the notions of convergent branches and approaching curves. Intuitively speaking, two infinity branches converge if they get closer as they tend to infinity. This concept will allow us to analyze whether two space curves approach each other and it generalizes the notion introduced for the plane case (see Section 4 in \cite{paper1}). For this purpose, since we will be working over $\Bbb C$ or $\Bbb R$, in the following $d$ denotes the usual unitary or Euclidean distance (see Chapter 5 in \cite{Horn}).

\para

\begin{definition}\label{D-distance0}
Two infinity branches, $B=\{(z,r_2(z), r_3(z))\in {\Bbb C}^3:\,z\in {\Bbb C},\,|z|>M\}$ and $\overline{B}=\{(z,\overline{r}_2(z),$ $\overline{r}_3(z))\in {\Bbb C}^3:\,z\in {\Bbb C},\,|z|>\overline{M}\}\subset \overline{B}$ converge if  $\lim_{z\rightarrow\infty} d(({r}_2(z),{r}_3(z)), (\overline{r}_2(z),\overline{r}_3(z)))=0.$
\end{definition}

\para

\begin{remark} \label{R-distance0} From the previous definition, we note that two convergent infinity branches are associated with the same point at infinity (see Remark \ref{R-definfinitybranch}). Furthermore, two branches $B=\{(z,r_2(z), r_3(z))\in {\Bbb C}^3:\,z\in {\Bbb C},\,|z|>M\}$ and $\overline{B}=\{(z,\overline{r}_2(z),\overline{r}_3(z))\in {\Bbb C}^3:\,z\in {\Bbb C},\,|z|>\overline{M}\}$ are convergent if and only if the terms with non-negative exponents in the series $r_i(z)$ and $\overline{r}_i(z)$ are the same, for $i=2,3$.
\end{remark}

\para

In Definition \ref{D-distance1}, we introduce the notion of approaching curves  that is, curves that approach each other. For this purpose, we recall that given an algebraic space curve ${\cal C}$ over $\Bbb C$ and a point $p\in {\Bbb C}^3$, the distance from $p$ to ${\cal C}$ is defined as $d(p,{\cal C})=\min\{d(p,q):q\in{\cal C}\}$.

\para

\begin{definition}\label{D-distance1}
Let ${\cal C} $ be an algebraic space curve over ${\Bbb C}$ with an infinity branch $B$. We say that a curve ${\overline{{\cal C}}}$ approaches ${\cal C}$ at its infinity branch $B=\{(z,r_2(z), r_3(z))\in {\Bbb C}^3:\,z\in {\Bbb C},\,|z|>M\}\subset B$ if $\lim_{z\rightarrow\infty}d((z,r_2(z), r_3(z)),\overline{\cal C})=0$.
\end{definition}

\para

In the following, we state some important results concerning two curves that approach each other  (see Lemma 3.6, Theorem 4.11, Remark 4.12, and Corollary 4.13 in \cite{paper1}).

\para

\begin{theorem}\label{T-curvas-aprox}
Let ${\cal C}$ be an algebraic space curve over $\Bbb C$ with an infinity branch $B$. An algebraic space curve ${\overline{{\cal C}}}$ approaches ${\cal C}$ at $B$ if and only if ${\overline{{\cal C}}}$ has an infinity branch, $\overline{B}$, such that $B$ and $\overline{B}$ are convergent.
\end{theorem}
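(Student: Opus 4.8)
The plan is to prove both implications by reducing the spatial statement to the planar case established in \cite{paper1}, using the explicit series representation in (\ref{Eq-conjugates}) and the characterization of convergent branches in Remark \ref{R-distance0}.

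\textbf{($\Leftarrow$) Sufficiency.} Suppose ${\overline{\cal C}}$ has an infinity branch $\overline{B}=\{(z,\overline{r}_2(z),\overline{r}_3(z)):|z|>\overline{M}\}$ convergent with $B=\{(z,r_2(z),r_3(z)):|z|>M\}$. By Definition \ref{D-distance0} this means $\lim_{z\to\infty}d((r_2(z),r_3(z)),(\overline{r}_2(z),\overline{r}_3(z)))=0$. For each $z$ with large modulus, the point $(z,\overline{r}_2(z),\overline{r}_3(z))$ lies on ${\overline{\cal C}}$, so
\[
d((z,r_2(z),r_3(z)),\overline{\cal C})\le d((z,r_2(z),r_3(z)),(z,\overline{r}_2(z),\overline{r}_3(z)))=d((r_2(z),r_3(z)),(\overline{r}_2(z),\overline{r}_3(z))),
\]
and the right-hand side tends to $0$. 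Hence $\lim_{z\to\infty}d((z,r_2(z),r_3(z)),\overline{\cal C})=0$, which is exactly Definition \ref{D-distance1}: ${\overline{\cal C}}$ approaches ${\cal C}$ at $B$. This direction is essentially immediate and mirrors Lemma 3.6 of \cite{paper1}.

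\textbf{($\Rightarrow$) Necessity.} This is the substantive direction. Assume ${\overline{\cal C}}$ approaches ${\cal C}$ at $B$, i.e. $\lim_{z\to\infty}d((z,r_2(z),r_3(z)),\overline{\cal C})=0$. I first argue that for each large $z$ the nearest point $\overline{q}(z)\in\overline{\cal C}$ to $(z,r_2(z),r_3(z))$ has a first coordinate $\overline{z}(z)$ with $\overline{z}(z)\to\infty$ and $\overline{z}(z)/z\to 1$ (since the displacement to $\overline{q}(z)$ is bounded, indeed $o(1)$ in fact $o(|z|)$ suffices here); consequently $\overline{q}(z)$ traces points of ${\overline{\cal C}}$ going to infinity, so they lie on some infinity branch $\overline{B}$ of ${\overline{\cal C}}$, say $\overline{B}=\{(w,\overline{r}_2(w),\overline{r}_3(w)):|w|>\overline{M}\}$ associated with a point at infinity $(1:\overline{m}_2:\overline{m}_3:0)$. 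Reparametrizing $\overline{B}$ by its first coordinate and comparing with Remark \ref{R-definfinitybranch}, the condition $d((z,r_2(z),r_3(z)),\overline{\cal C})\to 0$ forces, coordinate by coordinate, $|r_k(z)-\overline{r}_k(z)|\to 0$ for $k=2,3$ after matching the first coordinates; this uses that a small Euclidean displacement in ${\Bbb C}^3$ controls each coordinate difference, together with the fact that moving along $\overline{B}$ by $o(1)$ in the first coordinate changes $\overline{r}_k$ by $O(|w|^{1-N_{1,k}/N_k})$ — here one must be careful because $\overline{r}_k(w)$ has a term $\overline{m}_k w$ of order $1$, so a bounded change in $w$ does \emph{not} a priori give a bounded change in $\overline{r}_k$; the resolution is that the leading terms must already agree, $m_k=\overline{m}_k$ (otherwise the distance cannot go to $0$, by the projective-limit computation in Remark \ref{R-definfinitybranch}), after which the difference $\overline{r}_k(w)-\overline{m}_k w$ has order $<1$ and behaves well under $o(1)$ perturbations of $w$. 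Once $\lim_{z\to\infty}d((r_2(z),r_3(z)),(\overline{r}_2(z),\overline{r}_3(z)))=0$ is established, Definition \ref{D-distance0} gives that $B$ and $\overline{B}$ converge.

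\textbf{Main obstacle.} The delicate point is the necessity direction: showing that the points of $\overline{\cal C}$ realizing (approximately) the distance actually organize into a \emph{single} infinity branch $\overline{B}$ of $\overline{\cal C}$ and that this branch is genuinely convergent with $B$ in the sense of Definition \ref{D-distance0}, rather than merely ``close along a sequence.'' This requires controlling the reparametrization of $\overline{\cal C}$ near infinity by the first coordinate, handling the fact that only finitely many infinity branches exist (so a subsequence argument pins down one of them), and transferring the planar results of \cite{paper1} — in particular Theorem 4.11 and Corollary 4.13 there — to the pair of series $(r_2,r_3)$ versus $(\overline{r}_2,\overline{r}_3)$ componentwise. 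The bookkeeping is routine once one observes that the spatial distance dominates each planar coordinate distance and that the leading coefficients $m_2,m_3$ must match; the rest follows the plane-curve proof essentially verbatim, applied to each coordinate.
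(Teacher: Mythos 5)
Your proof is correct and follows the route the paper intends: the paper does not actually prove Theorem~\ref{T-curvas-aprox} but states it as the direct generalization of Lemma 3.6, Theorem 4.11 and Corollary 4.13 of \cite{paper1}, and your argument (the easy direction by bounding the distance to $\overline{\cal C}$ by the distance to the explicit point of $\overline{B}$, and the converse by tracking nearest points into one of the finitely many infinity branches and matching the non-negative-exponent terms via Remark~\ref{R-distance0}) is exactly that plane-curve proof transferred coordinatewise. One small remark: your worry about the linear term $\overline{m}_k w$ is unnecessary, since every term of $\overline{r}_k$ has exponent at most $1$ and is therefore uniformly Lipschitz for large $|w|$, so an $o(1)$ perturbation of the first coordinate already perturbs $\overline{r}_k$ by $o(1)$ without first needing $m_k=\overline{m}_k$.
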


\para

 Note that ${\overline{{\cal C}}}$ approaches ${\cal C}$ at some infinity branch $B$ if and only if ${\cal C}$ approaches ${\overline{{\cal C}}}$ at some infinity branch $\overline{B}$. In the following, we say that ${\cal C}$ and ${\overline{{\cal C}}}$ {\it approach each other} or that they are  {\it approaching curves}. Two approaching curves have a common point at infinity.

\para

\begin{corollary}\label{C-approaching-curves}
Let $\cal C$ be an algebraic space curve with an infinity branch $B$. Let ${\overline{{\cal C}}}_1$ and ${\overline{{\cal C}}}_2$ be two different curves that approach $\cal C$ at $B$. Thus, ${\overline{{\cal C}}}_i$ has an infinity branch $\overline{B_i}$ that converges with $B$, for $i=1,2$. Furthermore, if  $\overline{B_1}$ and $\overline{B_2}$ are convergent then, ${\overline{{\cal C}}}_1$ and ${\overline{{\cal C}}}_2$ approach each other.
\end{corollary}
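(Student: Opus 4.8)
The plan is to derive Corollary~\ref{C-approaching-curves} essentially as a two-fold application of Theorem~\ref{T-curvas-aprox}, using the characterization of convergent branches in Remark~\ref{R-distance0} only to keep track of the common point at infinity. The first assertion is immediate: since $\overline{{\cal C}}_i$ approaches ${\cal C}$ at the branch $B$, Theorem~\ref{T-curvas-aprox} (applied with ${\cal C}$ and $\overline{{\cal C}}=\overline{{\cal C}}_i$) yields an infinity branch $\overline{B_i}$ of $\overline{{\cal C}}_i$ convergent with $B$, for $i=1,2$. By Remark~\ref{R-distance0}, $B$, $\overline{B_1}$ and $\overline{B_2}$ are then all associated with the same point at infinity, say $P=(1:m_2:m_3:0)$.

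For the second assertion I would argue as follows. Assume $\overline{B_1}$ and $\overline{B_2}$ are convergent. Since $\overline{B_1}$ is an infinity branch of $\overline{{\cal C}}_1$ and $\overline{B_2}$ is an infinity branch of $\overline{{\cal C}}_2$, apply Theorem~\ref{T-curvas-aprox} once more, now with the roles ${\cal C}\rightsquigarrow\overline{{\cal C}}_1$, $B\rightsquigarrow\overline{B_1}$, $\overline{{\cal C}}\rightsquigarrow\overline{{\cal C}}_2$, $\overline{B}\rightsquigarrow\overline{B_2}$: the existence of the convergent pair $(\overline{B_1},\overline{B_2})$ gives that $\overline{{\cal C}}_2$ approaches $\overline{{\cal C}}_1$ at $\overline{B_1}$. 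By the symmetry of the approaching relation recorded right after Theorem~\ref{T-curvas-aprox}, $\overline{{\cal C}}_1$ and $\overline{{\cal C}}_2$ approach each other, which is the claim.

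To make the argument self-contained I would alternatively verify the second assertion directly from Definition~\ref{D-distance1} via the triangle inequality. Writing the series of $\overline{B_1}$ as $(\overline{r}_{1,2},\overline{r}_{1,3})$ and those of $\overline{B_2}$ as $(\overline{r}_{2,2},\overline{r}_{2,3})$, for $|z|$ large one has $d((z,\overline{r}_{1,2}(z),\overline{r}_{1,3}(z)),\overline{{\cal C}}_2)\le d\big((\overline{r}_{1,2}(z),\overline{r}_{1,3}(z)),(\overline{r}_{2,2}(z),\overline{r}_{2,3}(z))\big)$, since $(z,\overline{r}_{2,2}(z),\overline{r}_{2,3}(z))\in\overline{{\cal C}}_2$; because $\overline{B_1}$ and $\overline{B_2}$ are convergent, the right-hand side tends to $0$ as $z\to\infty$, so $\overline{{\cal C}}_2$ approaches $\overline{{\cal C}}_1$ at $\overline{B_1}$, and symmetrically. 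Note that the hypothesis that $\overline{{\cal C}}_1$ and $\overline{{\cal C}}_2$ are distinct is not used; it only serves to make the statement non-vacuous.

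The only point requiring a little care --- and it is mild --- is bookkeeping of the point at infinity: one must check that the second invocation of Theorem~\ref{T-curvas-aprox} is legitimate, i.e. that $\overline{B_1}$ is genuinely an infinity branch of $\overline{{\cal C}}_1$ (and not merely a branch of ${\cal C}$), which is exactly what its statement provides, and that, by Remark~\ref{R-distance0}, the convergence hypothesis is compatible with all three branches sharing $P$. No Puiseux-series computation is needed; the corollary is a formal consequence of the preceding theorem together with the symmetry of ``approaching''.
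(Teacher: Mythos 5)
Your proposal is correct and is exactly the derivation the paper relies on: the statement is given without proof here (it is cited as the space-curve analogue of Corollary 4.13 in \cite{paper1}), and it is indeed a formal consequence of Theorem \ref{T-curvas-aprox} applied twice together with the symmetry of the approaching relation, just as you argue. Your remark that the hypothesis ``$\overline{B_1}$ and $\overline{B_2}$ are convergent'' is in fact automatic (convergence of branches is transitive, e.g.\ by Remark \ref{R-distance0} or the triangle inequality) is also accurate.
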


\subsection{Computation of infinity branches}\label{Sub-computationinfinitybranches}

Let $\mathcal{C}$ be an irreducible algebraic space curve defined by  $f_i(x_1,x_2,x_3)\in \mathbb{R}[x_1, x_2, x_3]$ for $i=1,2$. In this subsection, our focus lies on computing the infinity branches of $\mathcal{C}$. These branches are points of the form $(z,r_{2}(z),r_{3}(z))$, where $r_{k}(z)$ are conjugated Puiseux series (for $k=2,3$), such that $f_j(z,r_{2}(z),r_{3}(z))=0,\,j=1,2,$ for every $z\in \mathbb{C}$ with $|z|>M$ (see Definition \ref{D-infinitybranch} and Remark \ref{R-definfinitybranch}).

\para

To achieve this, and taking into account the previous reasoning, we find it necessary to compute a finite number of pairs of Puiseux series $(\varphi_{2}(t),\varphi_{3}(t)) \in {{\mathbb{C}}\ll t\gg}^2$ such that $g_i(\varphi_{2}(t),\varphi_{3}(t),t) = 0,\,i=1,2$ (where $r_{k}(z)=z\varphi_{k}(z^{-1})$). To tackle this problem, various methods could be employed. In \cite{paper1} (see Section 3), it is demonstrated how infinity branches for a given plane curve can be  computed using well-known implemented algorithms. Specifically, to compute the series expansions, the command \textsf{puiseux} included in the package \textsf{algcurves} of the computer algebra system \textsf{Maple} is employed (see Example 3.5).


\para

The approach in this section is rooted in the idea of reducing the problem of computing infinity branches for space curves to the planar case. In other words, we aim to compute the infinity branches of a given space curve $\mathcal{C}$ from those of a birationally equivalent plane curve, denoted as $\mathcal{C}^p$.

\para

To obtain $\mathcal{C}^p$, we may apply the method outlined in \cite{Bajaj} (see Sections 2 and 3). The curve obtained using this approach is birationally equivalent to $\mathcal{C}$, implying there exists a birational correspondence between the points of $\mathcal{C}^p$ and those of $\mathcal{C}$. In \cite{Bajaj}, it is established that $\mathcal{C}^p$ can always be obtained by projecting $\mathcal{C}$ along some {\it valid projection direction}. Once we have $\mathcal{C}^p$, we can compute its infinity branches using the procedure developed in \cite{paper1}. Finally, we utilize the aforementioned birational correspondence to obtain the infinity branches of $\mathcal{C}$ from those of $\mathcal{C}^p$.

\para

In what follows, we assume that the $x_3$-axis serves as a valid projection direction. Otherwise, we apply a linear change of coordinates (see Section 2 in \cite{Bajaj}). Let ${\cal C}^p$ denote the projection of $\cal C$ along the $x_3$-axis, and let $f^p(x_1,x_2)\in {\Bbb R}[x_1, x_2]$ be the implicit polynomial defining ${\cal C}^p$. In \cite{Bajaj} (see Section 3), a method is illustrated for constructing a birational mapping $h(x_1,x_2)=h_1(x_1,x_2)/h_2(x_1,x_2)$ such that $(x_1,x_2,x_3)\in{\cal C}$ if and only if $(x_1,x_2)\in{\cal C}^p$ and $x_3=h(x_1,x_2)$. To achieve this, one needs to compute a polynomial remainder sequence (PRS) along the projection direction. Several methods exist for computing this sequence, see for instance \cite{Loos}. However, in \cite{Bajaj}, the subresultant PRS scheme is preferred for its computational efficiency (it can be computed using, for example, the computer algebra system {\sf Maple}; for further details, see Section 5.1.2 in \cite{SPSV}).

We refer to $h(x_1, x_2)$ as the {\it lift function} since we can derive the points of the space curve ${\cal C}$ by applying $h$ to the points of the plane-projected curve ${\cal C}^p$. Additionally, note that $x_3=h(x_1,x_2)$ if and only if $h_1(x_1,x_2)-h_2(x_1,x_2)x_3=0$. Consequently, ${\cal C}$ can be implicitly defined by the polynomials $f^p(x_1,x_2)$ and $f_3(x_1,x_2,x_3)=h_2(x_1,x_2)x_3-h_1(x_1,x_2).$

\para

In Theorem \ref{T-space-projected-branches}, we investigate the relationship between the infinity branches of the space curve $\cal C$ and the infinity branches of the corresponding plane curve ${\cal C}^p$. The core idea is to utilize the lift function $h$ to derive the infinity branches of the space curve $\cal C$ from those of the plane curve ${\cal C}^p$. An efficient method for computing the infinity branches of a plane curve is presented in Section 3 of \cite{paper1}.

\para

\begin{theorem}\label{T-space-projected-branches}
$B^p=\{(z,r_2(z))\in {\Bbb C}^2:\,z\in {\Bbb C},\,|z|>M^p\},\,M^p\in {\Bbb R}^+,$  is an infinity branch of   ${\cal C}^p$ if and only if there exists   $r_3(z) \in{{\Bbb C}\ll z\gg}$, such that
$B=\{(z,r_2(z), r_3(z))\in {\Bbb C}^3:\,z\in {\Bbb C},\,|z|>M\},\,M\in{\Bbb R}^+,$ is an infinity branch of $\cal C$.
\end{theorem}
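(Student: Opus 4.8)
The plan is to prove the equivalence by exploiting the description of $\mathcal{C}$ via the two polynomials $f^p(x_1,x_2)$ and $f_3(x_1,x_2,x_3)=h_2(x_1,x_2)x_3-h_1(x_1,x_2)$, together with the characterization of infinity branches through Puiseux series recalled before Definition~\ref{D-infinitybranch}.

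\medskip

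\noindent\textbf{($\Rightarrow$)} First I would assume $B^p=\{(z,r_2(z)):|z|>M^p\}$ is an infinity branch of $\mathcal{C}^p$, so that $f^p(z,r_2(z))=0$ identically for $|z|>M^p$, with $r_2(z)=mz+a_1z^{1-N_1/N}+\cdots$. Writing $z=t^{-1}$ and $r_2(z)=z\varphi_2(t)$, the pair $\varphi_2$ satisfies the univariate situation coming from $g^p(x_2,x_4)=F^p(1,x_2,x_4)$. The natural candidate for the third coordinate is simply $r_3(z):=h(z,r_2(z))=h_1(z,r_2(z))/h_2(z,r_2(z))$. The key point is to check that this rational expression, evaluated at a Puiseux series, is again a Puiseux series in $z$ of the required form (i.e.\ has the shape (\ref{Eq-conjugates}), in particular $\lim_{z\to\infty} r_3(z)/z$ exists and equals some $m_3\in\mathbb{C}$); this is where one must argue that $h_2(z,r_2(z))$ is not the zero series — it is nonzero because $h_2$ is nonzero on $\mathcal{C}^p$ (otherwise $h$ would not be well defined as a lift function on the branch) — and then invert it in $\mathbb{C}\ll t\gg$, which is a field. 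Once $r_3$ is a bona fide Puiseux series, $f_3(z,r_2(z),r_3(z))=h_2 r_3-h_1=0$ identically by construction, and $f^p(z,r_2(z))=0$ identically; since $f^p$ and $f_3$ implicitly define $\mathcal{C}$, the points $(z,r_2(z),r_3(z))$ lie on $\mathcal{C}$ for $|z|$ large, hence (after possibly enlarging $M$ to guarantee convergence of the series and $h_2\neq 0$) they form an infinity branch $B$ of $\mathcal{C}$.

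\medskip

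\noindent\textbf{($\Leftarrow$)} Conversely, if $B=\{(z,r_2(z),r_3(z)):|z|>M\}$ is an infinity branch of $\mathcal{C}$, then $f^p(z,r_2(z))=0$ identically (since $f^p$ is one of the defining polynomials of $\mathcal{C}$, or equivalently since projecting a point of $\mathcal{C}$ gives a point of $\mathcal{C}^p$), and $r_2(z)$ has the form (\ref{Eq-conjugates}). Hence $B^p:=\{(z,r_2(z)):|z|>M^p\}$ with a suitable $M^p\ge M$ is an infinity branch of $\mathcal{C}^p$; one should only check that $r_2$ genuinely satisfies $f^p(z,r_2(z))=0$ and not merely that the projection of $B$ lands in $\mathcal{C}^p$ — but these coincide here because the projection $\mathcal{C}\to\mathcal{C}^p$ is along $x_3$, so the first two coordinates of points of $B$ already trace out a branch of $\mathcal{C}^p$. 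A small point to address is that distinct branches of $\mathcal{C}$ could a priori project to the same branch of $\mathcal{C}^p$; this does not affect the stated equivalence (which is only about existence), but it is cleanly ruled out by the birationality of $h$, since $r_3$ is then forced to equal $h(z,r_2(z))$.

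\medskip

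\noindent\textbf{Main obstacle.} The delicate step is the forward direction: verifying that substituting the Puiseux series $r_2(z)$ into the rational function $h=h_1/h_2$ yields a Puiseux series of the correct shape, in particular that the denominator does not vanish identically and that the resulting series has no negative-order mismatch preventing the limit $\lim_{z\to\infty}r_3(z)/z=m_3$ from existing in $\mathbb{C}$. I would handle this by working in the field $\mathbb{C}\ll t\gg$ (algebraically closed by Puiseux's theorem, as recalled in the excerpt): $h_2(z,r_2(z))\in\mathbb{C}\ll t\gg\setminus\{0\}$ because $h_2$ does not vanish on the open dense subset of $\mathcal{C}^p$ where $h$ is defined, so its reciprocal lies in $\mathbb{C}\ll t\gg$, and $r_3=z\,h_1(t^{-1},z\varphi_2(t))\cdot h_2(t^{-1},z\varphi_2(t))^{-1}$ is a Laurent–Puiseux series whose order is controlled by the degrees of $h_1,h_2$; the bound $\deg h \le$ (something linear in the degrees of the $f_i$) forces the order of $r_3$ to be $\ge -1$, exactly as required by (\ref{Eq-conjugates}). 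The rest of the argument is bookkeeping: choosing a common $M$ large enough for all the series involved to converge and for $h_2$ to be nonvanishing, and invoking Definition~\ref{D-infinitybranch} and Remark~\ref{R-definfinitybranch} to conclude.
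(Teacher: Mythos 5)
Your proposal follows essentially the same route as the paper: the nontrivial direction is handled by setting $r_3(z)=h(z,r_2(z))=h_1(z,r_2(z))/h_2(z,r_2(z))$, arguing that $h_2(z,r_2(z))$ is a nonzero element of the field ${\Bbb C}\ll t\gg$ so the quotient is again a Puiseux series, and the reverse direction is immediate by projection along the $x_3$-axis. One small caveat: your justification that the order of $r_3$ is controlled by a degree bound on $h$ is not quite right (the denominator $h_2(z,r_2(z))$ could a priori vanish to high order at infinity); what actually guarantees $\lim_{z\to\infty}r_3(z)/z=m_3\in{\Bbb C}$ is the standing assumption that all points at infinity of $\cal C$ have the form $(1:m_2:m_3:0)$ --- a point the paper's own proof also leaves implicit.
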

\begin{proof} Clearly, if $B$ is an infinity branch of $\cal C$, then $B^p$ is an infinity branch of ${\cal C}^p$.
Conversely, let $B^p=\{(z,r_2(z))\in {\Bbb C}^2:\,z\in {\Bbb C},\,|z|>M^p\}$ be an infinity branch  of  ${\cal C}^p$, and we look
for a series $r_3(z)=z\varphi_3(1/z)$, $\varphi_3(z)\in{{\Bbb C}\ll z\gg}$, such that $B=\{(z,r_2(z), r_3(z))\in {\Bbb C}^3:\,z\in {\Bbb C},\,|z|>M\}$ is an infinity branch of $\cal C$.
Note that, from the discussion above, we can obtain it as $r_3(z)=h(z,r_2(z))$ (observe that since $(z,r_2(z))\in {\Bbb C}^2$ for $|z|>M^p$, and $r_3(z)=h(z,r_2(z))$, we consider $(z,r_2(z),r_3(z))\in {\Bbb C}^3$ for $|z|>M$, where $M=M^p$). However, we need to prove that $r_3(z)=z\varphi_3(1/z)$ for some Puiseux series $\varphi_3(z)$.

Given $(a_1,a_2,a_3)\in{\cal C}^3$, it holds that $f_3(a_1,a_2,a_3)=h_1(a_1,a_2)-h_2(a_1,a_2)a_3=0$. Thus, in particular, $(z,r_2(z),r_3(z))\in B\subset {\cal C}$ verifies that $f_3(z,r_2(z),r_3(z))=0$. Hence,
$F_3(z,r_2(z),r_3(z),1)=0$, where $F_3(\ox)$ is the homogeneous polynomial of $f_3(x_1, x_2, x_3)$.

Taking into account the results in Section 3 of \cite{paper1}, we have that $r_2(z)=z\varphi_2(1/z)$, where
$\varphi_2(z)\in{{\Bbb C}\ll z\gg}$. Now, we search for $\varphi_3(z)\in{{\Bbb C}\ll z\gg}$ such that $r_3(z)=z\varphi_3(1/z)$. This series must satisfy (see statement above) that
$F_3(z,z\varphi_2(1/z),z\varphi_3(1/z),1)=0$ for $|z|>M$. We set $z=t^{-1}$, and we get that
$F_3(t^{-1},t^{-1} \varphi_2(t), t^{-1}\varphi_3(t),1)=0$
or equivalently $F_3(1,\varphi_2(t),\varphi_3(t),t)=0.$ This equality  holds for $|t|<1/M$ i.e., this equality must be satisfied in a neighborhood of the point at infinity $(1,\varphi_2(0),\varphi_3(0),0)$. At this point, we observe that
$$F_3(\ox)=x_4^{n_2}H_2(x_1,x_2,x_4)x_3-x_4^{n_1}H_1(x_1,x_2,x_4),$$
where $H_i(x_1,x_2,x_4)$ is the homogeneous polynomial of $h_i(x_1,x_2),\,i=1,2$, and $n_1,n_2\in\mathbb{N}$. Hence, we have
$$F_3(1,\varphi_2(t),\varphi_3(t),t)=t^{n_1}H_1(1,\varphi_2(t),t)-t^{n_2}H_2(1,\varphi_2(t),t)\varphi_3(t),$$
and since $F_3(1,\varphi_2(t),\varphi_3(t),t)=0$, we obtain that  $\varphi_3(t)=t^{n_1-n_2}\frac{H_1(1,\varphi_2(t),t)}{H_2(1,\varphi_2(t),t)}.$
Clearly, $\varphi_3(t)$ can be expressed as a Puiseux series since ${{\Bbb C}\ll t\gg}$ is a field. Therefore, we conclude that $B=\{(z,r_2(z), r_3(z))\in {\Bbb C}^3:\,z\in {\Bbb C},\,|z|>M\}$, where $r_3(z)=z\varphi_3(1/z)$, is an infinity branch of $\cal C$. \end{proof}

\para

\noindent
In the following, we illustrate the above theorem with an example.

\para

\begin{example}\label{E-infbranches}
Let $\cal C$ be the irreducible space curve defined over $\Bbb C$ by:
$$f_1(x_1,x_2,x_3)=-x_3^2+2 x_1 x_2+x_1 x_3- x_2+2,\qquad f_2(x_1,x_2,x_3)=x_3-x_1 x_2+x_2^2.$$
The projection along the $x_3$-axis, ${\cal C}^p$, is given by the polynomial:
$$f^p(x_1,x_2)=-x_1^2x_2^2 + 2 x_1  x_2^3 - x_2^4 + x_1^2 x_2 - x_1 x_2^2 + 2 x_1 x_2 - x_2 + 2$$ (computed as $\Resultant_{x_3}(f_1, f_2)$; see Section 2.3 in \cite{SWP}).

By employing the method described in \cite{paper1} (see Section 3), we compute the infinity branches of ${\cal C}_p$. For this purpose, we use the {\sf algcurves} package within the computer algebra system {\sf Maple}; specifically, we utilize the {\sf puiseux} command. We obtain the branches
\begin{itemize}
  \item  $B_1^p=\{(z,r_{1,2}(z))\in {\Bbb C}^2: \,z\in {\Bbb C},\,|z|>M_1^p\}$, where:
$$r_{1,2}(z)=z - 309034/(2187 z^4) - 7832/(243 z^3) - 221/(27 z^2) - 7/(3 z) - 2+\cdots,$$
associated with the point at infinity $P_1=(1:1:0)$
  \item $B_2^p=\{(z,r_{2,2}(z))\in {\Bbb C}^2: \,z\in {\Bbb C},\,|z|>M_2^p\}$, where:
$$r_{2,2}(z)=z + 7228/(2187 z^4) - 430/(243 z^3) + 32/(27 z^2) - 2/(3 z) + 1+\cdots,$$
associated with the point at infinity $P_2=(1:1:0)$.
  \item $B_3^p=\{(z,r_{3,2}(z))\in {\Bbb C}^2: \,z\in {\Bbb C},\,|z|>M_3^p\}$, where:
$$r_{3,2}(z)=-6/z^4 + 4/z^3 - 2/z^2+\cdots,$$
associated with the point at infinity $P_3=(1:0:0)$.
  \item $B_4^p=\{(z,r_{4,2}(z))\in {\Bbb C}^2: \,z\in {\Bbb C},\,|z|>M_4^p\}$, where:
$$r_{4,2}(z)=144/z^4 + 30/z^3 + 9/z^2 + 3/z + 1+\cdots,$$
associated with the point at infinity $P_4=(1:0:0)$.
\end{itemize}

Upon obtaining the infinity branches of the projected curve ${\cal C}^p$, we compute the infinity branches of the space curve ${\cal C}$. For this purpose, we need to compute the lift function $h(x_1, x_2)$ (applying Sections 2 and 3 in \cite{Bajaj}) to obtain the third component of these branches. In this example, we only have to compute the remainder of $f_1$ divided by $f_2$ with respect to the variable $x_3$ (see e.g., Section 5.1.2 in \cite{SPSV}). We find that $\mbox{rem}_{x_3}(f_1, f_2)=-x_1^2 x_2^2 + 2 x_1 x_2^3 - x_2^4 + x_1^2 x_2 - x_1 x_2^2 + 2 x_1 x_2 - x_2 + 2$. Thus, the lift function $h(x_1, x_2)$ is obtained by solving the equation $f_2=0$ for the variable $x_3$. We get that $h(x_1,x_2)=x_1 x_2-x_2^2$, and thus, the infinity branches of the space curve are

\begin{itemize}
  \item  $B_1=\{(z,r_{1,2}(z),r_{1,3}(z))\in {\Bbb C}^3: \,z\in {\Bbb C},\,|z|>M_1\}$, where:
$$r_{1,3}(z)=-56456/(2187 z^3) - 1447/(243 z^2) - 31/(27 z) - 5/3 + 2 z  - 1711603/(2187 z^4)+\cdots,$$
associated with the point at infinity $P_1=(1:1:2:0)$
  \item $B_2=\{(z,r_{2,2}(z),r_{2,3}(z))\in {\Bbb C}^3: \,z\in {\Bbb C},\,|z|>M_2\}$, where:
$$r_{2,3}(z)=3968/(2187 z^3) - 254/(243 z^2) + 4/(27 z) - 1/3 - z   - 22688/(2187 z^4)+\cdots,$$
associated with the point at infinity $P_2=(1:1:-1:0)$.
  \item $B_3=\{(z,r_{3,2}(z),r_{3,3}(z))\in {\Bbb C}^3: \,z\in {\Bbb C},\,|z|>M_3\}$, where:
$$r_{3,3}(z)=-6/z^3 + 4/z^2 - 2/z   - 4/z^4+\cdots,$$
associated with the point at infinity $P_3=(1:0:0:0)$.
  \item $B_4=\{(z,r_{4,2}(z),r_{4,3}(z))\in {\Bbb C}^3: \,z\in {\Bbb C},\,|z|>M_4\}$, where:
$$r_{4,3}(z)=30/z^3 + 3/z^2 + 3/z + 2 + z  - 549/z^4+\cdots,$$
associated with the point at infinity $P_4=(1:0:1:0)$.
\end{itemize}
In Figure \ref{F-ejemplo-branches}, we plot the curve $\cal C$ and some points of the branches $B_i,\,i=1,\ldots,4$.

\para

\begin{center}
   \begin{figure}[h]
\hspace{3.5cm}
  \includegraphics[width=0.60\textwidth]{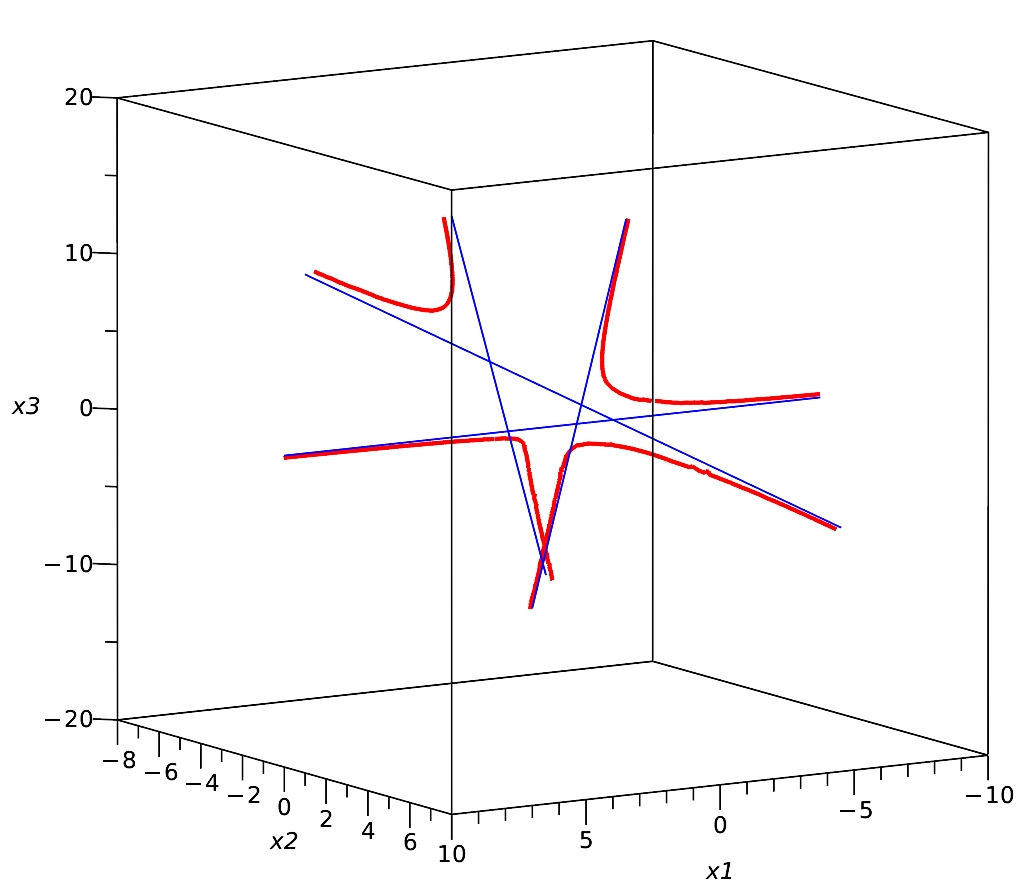}
        \caption{Curve $\cal C$ (red color) and infinity branches (lines in blue color).}\label{F-ejemplo-branches}
 \end{figure}
\end{center}

 \end{example}

\section{Asymptotes of a given infinity branch}\label{S-computationasymptote}

In \cite{BlascoPerezII} (see Section 3), we demonstrate how certain algebraic plane curves can be approached at infinity by curves of lower degree. A well-known example is the case of hyperbolas, which are degree $2$ curves approached at infinity by two lines (their asymptotes). Similar situations may arise when dealing with curves of higher degree.

\para

Determining the asymptotes of an implicitly defined algebraic curve is an important topic covered in many analysis textbooks (see e.g., \cite{maxwell}). Some algorithms for computing the linear asymptotes of a real plane algebraic curve can be found in the literature (see e.g., \cite{jovan}, \cite{RSS}, or \cite{Zeng}). However, as shown in \cite{BlascoPerezII}, an algebraic plane curve may have more general curves than lines describing the behavior of a branch at points with sufficiently large coordinates. The theory and practical methods concerning these special curves, called {\it generalized asymptotes}, are presented in \cite{BlascoPerezII} (see Sections 3, 4, and 5) for the case of plane curves.

\para

In this section, we aim to study and compute the {\it generalized asymptotes} for a given algebraic space curve. We use the concepts and results outlined in \cite{paper1} to craft an efficient algorithm for computing the g-asymptotes of $\mathcal{C}$. The strategy involves obtaining the asymptotes of $\mathcal{C}$ by first determining those of ${\mathcal{C}^p}$, where ${\mathcal{C}^p}$ represents a planar curve birationally equivalent to the given spatial curve. Thus, the task boils down to establishing the equivalence between the asymptotes of these curves and devising an effective algorithm for computing the asymptotes of $\mathcal{C}$ once those of ${\mathcal{C}^p}$ are identified. This concept echoes the approach used in Theorem \ref{T-space-projected-branches}, where the birational correspondence between spatial and planar branches is demonstrated. Furthermore, leveraging the effective results presented in the author's previous works, one can compute the asymptotes of spatial curves very efficiently, avoiding the use of Puiseux series. It's evident that this methodology readily extends to curves in $n$-dimensional space.

\para

Some results cannot be seen as a straightforward generalization from the case of plane curves. Although the construction of asymptotes is similar (in the sense that a new component needs to be computed for the space curve), the formalization of the results as well as the detailed proofs require a different perspective (for instance, the computation of the degree of a space curve is entirely different from the computation of the degree of a plane curve). Thus, we need new tools and different reasoning compared to the plane case.

\para

We start  with some important definitions and previous results that will lead to the construction of  asymptotes in Subsection  \ref{Sub-construction-asymptote}.

\para

\begin{definition}\label{D-perfect-curve}
A curve ${\cal C}$ of degree $d_{\cal C}$ is a {\em perfect curve} if it cannot be approached by any curve of degree less than $d_{\cal C}$.
\end{definition}

\para

A curve that is not perfect can be approached by other curves of less degree. If these curves are perfect, we call them {\it g-asymptotes}. More precisely, we have the following definition.

\para

\begin{definition}\label{D-asymptote}
Let ${\cal C}$ be a curve with an infinity branch $B$. A {\em g-asymptote} (generalized asymptote) of ${\cal C}$ at $B$ is a perfect curve that approaches ${\cal C}$ at $B$.
\end{definition}

\para

The notion of {\em g-asymptote} is similar to the classical concept of asymptote. The difference is that a g-asymptote does not have to be a line, but a perfect curve. Actually, it is a generalization, since every line is a perfect curve (this remark follows from Definition \ref{D-perfect-curve}). Throughout the paper, we refer to {\em g-asymptote} simply as {\it asymptote}.

\para

\begin{remark}\label{R-minimal-degree}
The degree of an asymptote is less than or equal to the degree of the curve it approaches. In fact, an asymptote of a curve $\cal C$ at a branch $B$ has minimal degree among all the curves that approach $\cal C$ at $B$ (see Remark 3 in \cite{BlascoPerezII}).
\end{remark}

\para

In the following, we prove that every infinity branch of a given algebraic space curve has, at least, one asymptote and we show how to obtain it. Most of the results introduced below for the case of space curves generalize the results presented in \cite{BlascoPerezII} for the plane case.

\para

Let $\cal C$ be an irreducible space curve implicitly defined by the polynomials $f_i\in {\Bbb R}[x_1,x_2,x_3],$ $i=1,2$, and let $B=\{(z,r_2(z), r_3(z))\in {\Bbb C}^3:\,z\in {\Bbb C},\,|z|>M\}$ be an infinity branch of $\cal C$ associated with the point at infinity $P=(1:m_2:m_3:0)$. We know that $r_2(z)$ and $r_3(z)$ are given as follows:
$$r_{2}(z)=m_2z+a_{1,2}z^{1-N_{1,2}/N_2}+a_{2,2}z^{1-N_{2,2}/N_2}+a_{3,2}z^{1-N_{3,2}/N_2} + \cdots$$
$$r_{3}(z)=m_3z+a_{1,3}z^{1-N_{1,3}/N_3}+a_{2,3}z^{1-N_{2,3}/N_3}+a_{3,3}z^{1-N_{3,3}/N_3} + \cdots$$
where $a_{i,2}\neq0$, $N_2, N_{i,2}\in {\Bbb N},\,i\geq 1$, $0<N_{1,2}<N_{2,2}<\cdots$, and $a_{i,3}\neq0$, $N_3, N_{i,3}\in {\Bbb N},\,i\geq 1$, and $0<N_{1,3}<N_{2,3}<\cdots$. Let $N:=\operatorname{lcm}(N_2,N_3)$, and note that $\nu(B)=N$. It holds that $\deg({\cal C})\geq N$ (see \cite{NewPara-As}).

\para

In the following, let $\ell_k,\,k=2,3$, be the first integer verifying that $N_{\ell_k, k}\leq N_k<N_{\ell_k+1, k}$. Then, we can write
$$
\begin{array}{l}
r_{2}(z)=m_2z+a_{1,2}z^{1-\frac{N_{1,2}}{N_2}}+\cdots +a_{\ell_2, 2}z^{1-\frac{N_{\ell_2, 2}}{N_2}}+a_{\ell_2+1, 2}z^{1-\frac{N_{\ell_2+1, 2}}{N_2}}+\cdots\\
r_{3}(z)=m_3z+a_{1,3}z^{1-\frac{N_{1,3}}{N_3}}+\cdots +a_{\ell_3, 3}z^{1-\frac{N_{\ell_3, 3}}{N_3}}+a_{\ell_3+1, 3}z^{1-\frac{N_{\ell_3+1, 3}}{N_3}}+\cdots
\end{array}
$$
where the exponents $1-\frac{N_{j,k}}{N_k}$ are non-negative for $j\leq\ell_{k}$ and negative for $j>\ell_{k}$.   Now, we simplify (if necessary) the non-negative exponents and rewrite the above expressions, and we get

\begin{equation}\label{Eq-inf-branchn}
\begin{array}{l}
r_{2}(z)=m_2z+a_{1,2}z^{1-\frac{n_{1,2}}{n_2}}+\cdots +a_{\ell_2, 2}z^{1-\frac{n_{\ell_2, 2}}{n_2}}+a_{\ell_2+1, 2}z^{1-\frac{N_{\ell_2+1, 2}}{N_2}}+\cdots\\
r_{3}(z)=m_3z+a_{1,3}z^{1-\frac{n_{1,3}}{n_3}}+\cdots +a_{\ell_3, 3}z^{1-\frac{n_{\ell_3, 3}}{n_3}}+a_{\ell_3+1, 3}z^{1-\frac{N_{\ell_3+1, 3}}{N_3}}+\cdots
\end{array}
\end{equation}
where $\operatorname{gcd}(n_k,n_{1,k},\ldots,n_{\ell_k,k})=1$, $k=1,2$. Note that $0<n_{1,k}<n_{2,k}<\cdots$, $n_{\ell_k,k}\leq n_k$.

\para

Under these conditions, we introduce the definition of degree of a branch $B$ as follows:

\para

\begin{definition}\label{D-degreebranch} Let $B=\{(z,r_2(z), r_3(z))\in {\Bbb C}^3: \,z\in {\Bbb C},\,|z|>M\}$ defined by  (\ref{Eq-inf-branchn}), be an infinity branch associated with $P=(1:m_2:m_3:0)$, $m_j\in {\Bbb C}$, $j=1,2$. We say that $n:=\operatorname{lcm}(n_2, n_3)$ is the degree of $B$, and we denote it by $\deg(B)$.
\end{definition}

\para
Note that $n_i\leq N_i$, $i=1,2$. Thus, $n=\operatorname{lcm}(n_2, n_3)=\deg(B)\leq N=\operatorname{lcm}(N_2, N_3)$, and since  $\deg({\cal C})\geq N$, we get that $\deg({\cal C})\geq \deg(B)$. In fact, if $\overline{{\cal C}}$ is a curve that approaches ${\cal C}$ at its infinity branch $B$, then  that $\deg(\overline{{\cal C}})\geq \deg(B)$.

\subsection{Construction of
asymptotes}\label{Sub-construction-asymptote}

In this subsection, we present an algorithm that allows computing an asymptote for each of the infinity branches of a given implicit space curve. An example illustrating the algorithm is also presented.

\para

The algorithm is derived from the results presented above and the construction developed throughout this subsection. We formally show how to construct  an asymptote of the given space curve that can be easily parametrized, and we prove that this parametrization is proper. Although the results are equivalent to those presented for the plane case, the proofs and detailed discussions have to be different since the tools used to deal with the space curve differ from those used in the plane case  (see Section 3 in \cite{BlascoPerezII}).

\para

Let ${\cal C}$ be a space curve with an infinity branch $B=\{(z,r_2(z),r_3(z))\in {\Bbb C}^3:\,z\in {\Bbb C},\,|z|>M\}$. Taking into account the results presented above, we have that any curve $\overline{{\cal C}}$ approaching ${\cal C}$ at $B$ has an infinity branch $\overline{B}=\{(z,\overline{r}_2(z),\overline{r}_3(z))\in {\Bbb C}^3:\,z\in {\Bbb C},\,|z|>\overline{M}\}$ such that the terms with non-negative exponents in $r_i(z)$ and $\overline{r}_i(z)$ (for $i=2,3$) are the same. We consider the series $\tilde{r}_{2}(z)$ and $\tilde{r}_{3}(z)$, obtained from $r_{2}(z)$ and $r_{3}(z)$ by removing the terms with negative exponents (see equation (\ref{Eq-inf-branchn})). Then, we have that

\begin{equation}\label{Eq-inf-branch3}
\begin{array}{l}
\tilde{r}_{2}(z)=m_2z+a_{1,2}z^{1-n_{1,2}/n_2}+\cdots +a_{\ell_2, 2}z^{1-n_{\ell_2, 2}/n_2}\\
\tilde{r}_{3}(z)=m_3z+a_{1,3}z^{1-n_{1,3}/n_3}+\cdots +a_{\ell_3, 3}z^{1-n_{\ell_3, 3}/n_3}
\end{array}
\end{equation}
where $a_{j,k},\ldots\in\mathbb{C}\setminus \{0\}$, $m_k\in {\Bbb C}$, $n_k, n_{j,k},\ldots\in\mathbb{N}$, $\gcd(n_k,n_{1,k},\ldots,n_{\ell,k})=1$, and $0<n_{1,k}<n_{2,k}<\cdots$. That is, $\tilde{r}_k$ has the same terms with non-negative exponents as $r_k$, and $\tilde{r}_k$ does not have terms with negative exponents.

\para

Let $\widetilde{{\cal C}}$ be the space curve containing the branch $\widetilde{B}=\{(z,\tilde{r}_2(z),\tilde{r}_3(z))\in {\Bbb C}^3:\,z\in {\Bbb C},\,|z|>\widetilde{M}\}$. Observe that
\[\widetilde{{\cal Q}}(t)=(t^n, m_2t^n+a_{1,2}t^{\upsilon_2(n_2-n_{1,2})}+\cdots +a_{\ell_2,2}t^{\upsilon_2(n_2-n_{\ell_2,2})},\]
\begin{equation}\label{Eq-parametric-case1}
m_3t^n+a_{1,3}t^{\upsilon_3(n_3-n_{1,3})} +\cdots +a_{\ell_3,3}t^{\upsilon_3(n_3-n_{\ell_3,3})})\in {\Bbb C}[t]^3,
\end{equation}
where $n=\lcm(n_2, n_3)$, $\upsilon_k=n/n_k$, $n_k, n_{1,k},\ldots,n_{\ell_k,k}\in\mathbb{N}$, $0<n_{1,k}<\cdots<n_{\ell_k, k}$, and $\gcd(n_k,n_{1,k},\ldots,$ $n_{\ell_k, k})=1$, is a polynomial parametrization of $\widetilde{{\cal C}}$. In addition,   we prove that $\widetilde{{\cal Q}}$ is proper (i.e., invertible), and $\deg(\widetilde{{\cal C}})=\deg(B)$. Hence, the   curve $\widetilde{{\cal C}}$ is an asymptote of $\mathcal{C}$ at $B$.

\para

From this reasoning, in the following, we present an algorithm that computes an asymptote for each infinity branch of a given space curve. We assume that we have prepared the input curve $\mathcal{C}$, by means of a suitable linear change of coordinates if necessary, such that $(0:a:b:0)$ ($a\neq0$ or $b\neq 0$) is not a point at infinity of $\mathcal{C}$. We consider the birational correspondence between the points of $\mathcal{C}^p$ and the points of $\mathcal{C}$, where $\mathcal{C}^p$ is the plane curve obtained by projecting $\mathcal{C}$ along the $x_3$-axis (see Subsection \ref{Sub-computationinfinitybranches}).

\para

We should note that the significant issue lies in the inefficiency of calculation techniques, as they necessitate computing the entire infinity branch repeatedly using Puiseux series.

\para

\begin{center}
\fbox{\hspace*{2 mm}\parbox{15 cm}{ \vspace*{2 mm} {\bf Algorithm
{\sf Space Asymptotes Construction.}} \vspace*{0.2cm}

\noindent {\sf Given} an irreducible real algebraic space curve $\cal C$
implicitly defined
 by two polynomials $f_1(x_1,x_2,x_3),f_2(x_1,x_2,x_3)\in {\Bbb R}[x_1,x_2,x_3]$,  the algorithm {\sf outputs} an
asymptote for each of its infinity branches.

\begin{itemize}

\item[1.] Compute the projection of $\cal C$ along
the $x_3$-axis. Let  ${\cal C}_p$ be this projection and $f^p(x_1,x_2)$ the implicit polynomial defining ${\cal C}_p$.

\item[2.] Determine the lift function $h(x_1,x_2)$ (see
Sections 2 and 3 in \cite{Bajaj}).

\item[3.] Compute the  infinity branches of ${\cal C}_p$
by applying  the results in Section 3 in \cite{paper1}.

\item[4.] For each branch $B^p_i=\{(z,r_{i,2}(z))\in {\Bbb
C}^2:\,z\in {\Bbb C},\,|z|>M^p_{i,2}\}$, $i=1,\ldots, s,$ do:
\begin{itemize}
\item[4.1.] Compute the corresponding infinity branch of $\cal C$:
$$B_i=\{(z,r_{i,2}(z),r_{i,3}(z))\in {\Bbb
C}^3:\,z\in {\Bbb C},\,|z|>M_i\}$$ where
$r_{i,3}(z)=h(z,r_{i,2}(z))$ is given as a Puiseux series.

\item[4.2.] Consider the series $\tilde{r}_{i,2}(z)$ and
$\tilde{r}_{i,3}(z)$  obtained by eliminating the terms with
negative exponent in $r_{i,2}(z)$ and $r_{i,3}(z)$, respectively  (see equation (\ref{Eq-inf-branch3})).

\item[4.3.] Return  the asymptote $\widetilde{\cal C}_i$ defined by the proper parametrization,
$\widetilde{Q}_i(t)=(t^{n_i},\,\tilde{r}_{i,2}(t^{n_i}),\,\tilde{r}_{i,3}(t^{n_i}))\in
{\Bbb C}[t]^3$, where $n_i=\deg(B_i)$ (see Definition
\ref{D-degreebranch}).
\end{itemize}\end{itemize}
}\hspace{2 mm}}
\end{center}

\para

\begin{remark}\label{R-algoritmo}\begin{itemize}
\item[1.] The implicit polynomial $f^p(x_1,x_2)$  defining ${\cal C}_p$ (see step 1) can be computed as $f^p(x_1,x_2)=\resultant_{x_3}(f_1,f_2)$ (see Section 4.5 in \cite{SWP}).
\item[2.] Since   we have assumed that the given algebraic space curve $\cal C$ only has points at infinity of the form $(1 : m_2 : m_3 : 0)$, we have that $(0:1:0)$ is not a point  at infinity of the plane curve ${\cal C}_p$. Thus, results in Section 3 in \cite{paper1} can be applied.  
       
    \end{itemize}
\end{remark}

\para

\noindent
In the following example, we  illustrate algorithm {\sf Space Asymptotes Construction.} \para

\para

\begin{example}
Let $\mathcal{C}$ be the algebraic space curve over $\mathbb{C}$ introduced in Example \ref{E-infbranches}. In Example \ref{E-infbranches}, we show that $\mathcal{C}$ has four infinity branches $B_i=\{(z,r_{i,2}(z),r_{i3,}(z))\in \mathbb{C}^3: \,z\in \mathbb{C},\,|z|>M_i\},\, i=1,\ldots,4.$

These branches were obtained by applying steps 1, 2, 3, and 4.1 of Algorithm \textsf{Space Asymptotes Construction}. Now we apply step 4.2, and we compute the series $\tilde{r}_{i,j}(z)$ by removing the terms with negative exponent from the series $r_{i,j}(z)$, $i=1,\ldots,4$, $j=2,3$. We get:
$$
\begin{array}{llll}
\displaystyle
\tilde{r}_{1,2}(z)=z - 2,   & \displaystyle\tilde{r}_{2,2}(z)=z + 1, & \displaystyle\tilde{r}_{3,2}(z)=0, & \displaystyle\tilde{r}_{4,2}(z)=1,\\
\\
\displaystyle \tilde{r}_{1,3}(z)=-5/3 + 2z,   &
\displaystyle\tilde{r}_{2,3}(z)= -1/3 - z, &
\displaystyle\tilde{r}_{3,3}(z)=0, &
\displaystyle\tilde{r}_{4,3}(z)=2 + z.
\end{array}
$$
Thus, in step 4.3, we obtain:
$$\widetilde{Q}_1(t)=(t,\,\tilde{r}_{1,2}(t),\,\tilde{r}_{1,3}(t))=(t, t - 2, -5/3 + 2 t),\quad  \widetilde{Q}_2(t)=(t,\,\tilde{r}_{2,2}(t),\,\tilde{r}_{2,3}(t))=(t,t + 1, -1/3 - t),$$
  $$\widetilde{Q}_3(t)=(t,\,\tilde{r}_{3,2}(t),\,\tilde{r}_{3,3}(t))=(t,0,0),\quad \widetilde{Q}_4(t)=(t,\,\tilde{r}_{4,2}(t),\,\tilde{r}_{4,3}(t))=(t,1,2+t).$$
 $\widetilde{Q}_i$ are proper parametrizations  of the asymptotes $\widetilde{{\cal C}}_i$, which approach $\mathcal{C}$ at its infinity branches $B_i$, for $i=1,\ldots,4$. In Figure \ref{F-ejemplo-asintotas}, we plot the curve $\mathcal{C}$ and its asymptotes $\widetilde{{\cal C}}_i$.

\begin{center}
   \begin{figure}[h]
   $\begin{array}{ccc}
    \includegraphics[width=0.31\textwidth]{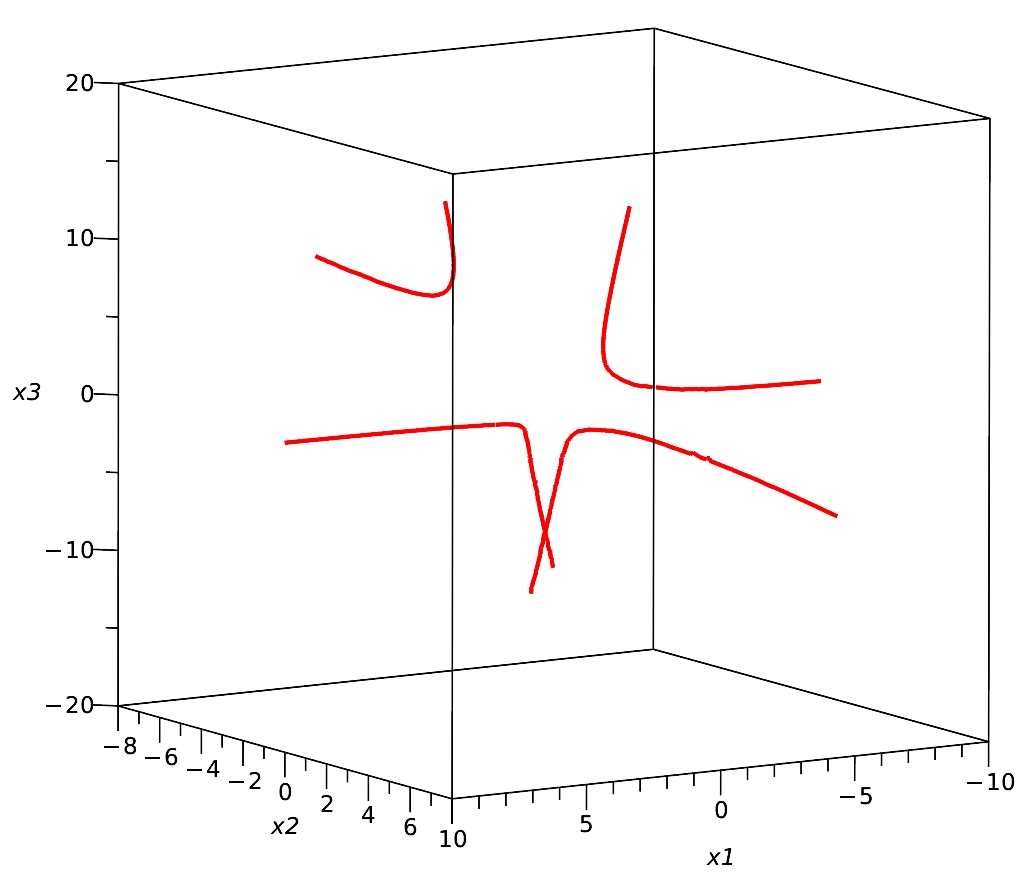}  & \includegraphics[width=0.315\textwidth]{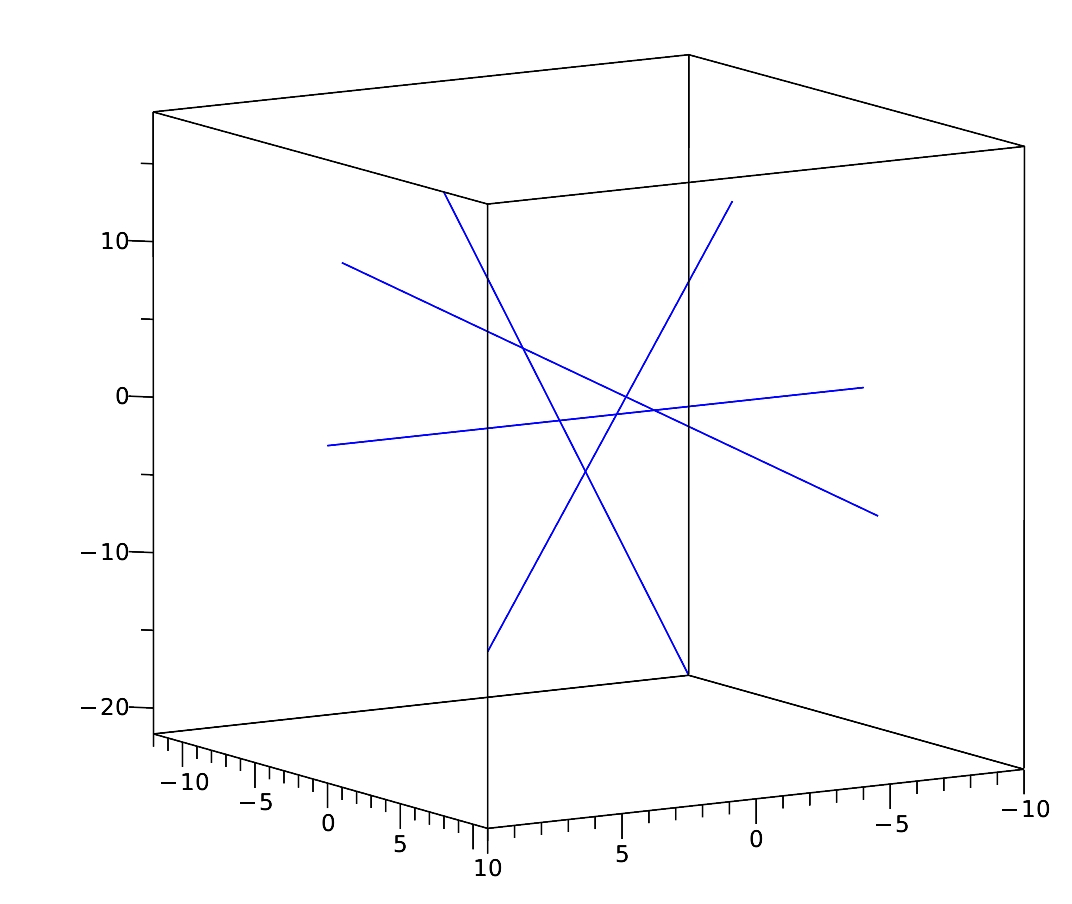}&  \includegraphics[width=0.31\textwidth]{fig13}
   \end{array}$
        \caption{Curve $\cal C$ (left),  asymptotes (center) and curve and asymptotes (right).}\label{F-ejemplo-asintotas}
 \end{figure}
\end{center}

\end{example}

\subsection{New efficient method for the computation of asymptotes}\label{Sub-improvement}

In this section, we introduce an enhancement of the method described above, which circumvents the need for computing infinity branches and Puiseux series. Instead, we only need to identify the solutions of a triangular system of equations derived from the implicit polynomial.

\para

It's worth mentioning that we assume that all the infinity points are of the form $(1:m_2:m_3:0)$; otherwise, we apply a change of coordinates.

\para

We provide the main theorem and a corollary, outlining a constructive approach for determining all the associated asymptotes (also the corresponding branch) to the infinity points. For this purpose, we denote by $F_3(x_1,x_2,x_3,x_4)=x_3H_2(x_1,x_2,x_4)-H_1(x_1,x_2,x_4)$ the homogenization of $f_3(x_1,x_2,x_3)=x_3h_2(x_1,x_2)-h_1(x_1,x_2)$ and  $B_p=\{(z,r_2(z))\in {\Bbb C}^2:\,z\in {\Bbb C},\,|z|>M\},$
$$r_2(z)=a_0z+a_1z^{(k-1)/k}+a_2z^{(k-2)/k}+\cdots+a_k+a_{k+1}z^{-1/k}+a_{k+2}z^{-2/k}+ \cdots,$$
is an infinity branch of the plane curve ${\cal C}_p$  defined by   $f^p(x_1,x_2) \in \mathbb{R}[x_1,x_2]$. Observe that the asymptote of ${\cal C}_p$ is given by $(t^k, q_2(t))$, where $q_2(t)=a_0t^k+a_1t^{k-1}+a_2t^{k-2}+\cdots+a_k$.
  \para

  Finally, in the following $b_j,\,j=0,\ldots, k$, denote some undetermined coefficients. We start with the following technical lemma.

\para

\begin{lemma}\label{L-final} There exists $n_0\in {\Bbb N}$ such that for every $n\geq n_0,\,\,n\in {\Bbb N}$, it holds that
\[F_3(t^{n+k},a_0t^{n+k}+a_1t^{n+k-1}+\cdots+a_{n+k}, t^n(b_0 t^{k}+b_{1}t^{k-1}+\cdots+b_{k}),  t^n)=\]
$$\Lambda_0(b_0) t^{\mu}+\Lambda_{1}(b_0, b_{1}) t^{\mu-1}+\cdots+\Lambda_k(b_0, b_{1},\ldots,b_k) t^{\mu-k}+\sum_{j=k+1}^{\mu} \Lambda_j(b_0, b_{1},\ldots,b_k) t^{\mu-j},$$
where
$$\Lambda_0(b_0)=b_0 \rho(a_0,\ldots,a_r)-\rho_0(a_0,\ldots,a_r),\qquad \rho(a_0,\ldots,a_r)\not=0,$$
$$\Lambda_j(b_0, b_{1},\ldots,b_j)=b_j \rho(a_0,\ldots,a_r)-\rho_j(a_0,\ldots,a_{r+j}),\qquad j=0,\ldots, k$$
 $\mu-k\geq 0$ and $r\leq n$.
\end{lemma}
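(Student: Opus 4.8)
The plan is to substitute the indicated quantities into the polynomial $F_3$ and track the resulting Laurent polynomial in $t$ degree by degree, isolating the coefficient of each power $t^{\mu-j}$. Recall that $F_3(x_1,x_2,x_3,x_4)=x_3 H_2(x_1,x_2,x_4)-H_1(x_1,x_2,x_4)$, where $H_1,H_2$ are the homogenizations of $h_1,h_2$. First I would set
\[
X_1(t)=t^{n+k},\qquad X_2(t)=\sum_{i=0}^{n+k} a_i t^{n+k-i},\qquad X_4(t)=t^n,
\]
and $X_3(t)=t^n(b_0 t^{k}+b_1 t^{k-1}+\cdots+b_k)$. The key observation is that $H_2(X_1,X_2,X_4)$ is a homogeneous polynomial of some degree $m_2$ in three homogeneous arguments, and after substitution it becomes a Laurent polynomial in $t$ whose \emph{leading} (highest-degree) coefficient depends only on $a_0$ together with the leading coefficients of $h_1,h_2$ — this is where the quantity $\rho(a_0,\ldots,a_r)$ comes from. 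The crucial point, which needs justification, is that $\rho(a_0,\dots,a_r)\neq 0$: this follows because $h$ is the lift function and $H_2(1,m_2,0)$ (the leading symbol evaluated at the point at infinity) must be nonzero — otherwise $x_3=h_1/h_2$ would not be well-defined along the branch, contradicting Theorem~\ref{T-space-projected-branches}; more precisely, $\rho$ is the value of the relevant initial form of $H_2$ at the direction determined by the branch.

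Next I would expand $X_3(t)H_2(X_1,X_2,X_4)$ and $H_1(X_1,X_2,X_4)$ separately. Because $X_3(t)$ has leading term $b_0 t^{n+k}$, multiplying by the leading part of $H_2$ produces a term $b_0\,\rho(a_0,\dots,a_r)\,t^{\mu}$ for an appropriate $\mu$; subtracting the leading part of $H_1$, which contributes $\rho_0(a_0,\dots,a_r)t^{\mu}$, yields $\Lambda_0(b_0)=b_0\rho(a_0,\dots,a_r)-\rho_0(a_0,\dots,a_r)$. Descending one power at a time, the coefficient of $t^{\mu-j}$ collects: (i) the contribution $b_j\rho(a_0,\dots,a_r)$ coming from the $b_j t^{k-j}$ term of $X_3$ multiplied against the leading part of $H_2$; (ii) contributions of the form $b_0,\dots,b_{j-1}$ times lower-order coefficients of $H_2$ evaluated on the truncations of $X_2$; and (iii) the $j$-th coefficient of $H_1$ after substitution. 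Grouping (ii) and (iii) into a single polynomial $\rho_j(a_0,\dots,a_{r+j})$ — note it involves finitely many more of the $a_i$'s because lower-order terms of $H_i$ pair with later terms of $X_2$ — gives exactly $\Lambda_j(b_0,\dots,b_j)=b_j\rho(a_0,\dots,a_r)-\rho_j(a_0,\dots,a_{r+j})$ for $j=0,\dots,k$. For $j>k$ the coefficients $\Lambda_j$ still depend only on $b_0,\dots,b_k$ (since $X_3$ has no further $b$'s) and on the $a_i$'s, which is what the summation term records.

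The role of $n_0$ is bookkeeping: we must choose $n$ large enough that (a) $\mu-k\geq 0$, i.e. the exponents of $F_3$ after substitution stay nonnegative through the first $k+1$ terms, and (b) $r\leq n$, i.e. the finitely many $a_i$'s appearing in $\rho$ and in each $\rho_j$ ($j\le k$) have index at most $n$, so that the truncation $a_0 t^{n+k}+\cdots+a_{n+k}$ already contains all the coefficients that actually influence $\Lambda_0,\dots,\Lambda_k$. Since $r$ is determined by the degrees of $h_1,h_2$ (hence fixed once $\mathcal C$ is given) and $\mu$ grows linearly in $n$ while $k$ is fixed, both conditions hold for all sufficiently large $n$, giving the existence of $n_0$.

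\textbf{Main obstacle.} The delicate step is establishing $\rho(a_0,\dots,a_r)\neq 0$ and pinning down precisely which initial form of $H_2$ it is: one must argue that along the infinity branch $B_p$ the denominator $H_2$ of the lift function does not vanish to higher order than expected, equivalently that the point at infinity of $\mathcal C_p$ under consideration is not a base point of the projection; this is exactly the content guaranteed by the construction of $h$ in \cite{Bajaj} together with the fact (Theorem~\ref{T-space-projected-branches}) that $r_3(z)=h(z,r_2(z))$ is a genuine Puiseux series, so $H_2(1,\varphi_2(t),t)$ has a nonzero leading coefficient. The rest is a careful but routine degree-counting expansion.
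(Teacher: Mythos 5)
Your proposal follows essentially the same route as the paper: expand $F_3=x_3H_2-H_1$ along the truncated branch, identify $\rho$ as the leading coefficient of the substituted $H_2$ (nonzero because that substitution is a nonzero polynomial in $t$), read off the triangular dependence of the $\Lambda_j$ on $b_0,\ldots,b_j$, and take $n_0$ large enough that $\mu-k\geq 0$ and that all the $a_i$ actually entering $\Lambda_0,\ldots,\Lambda_k$ are captured by the truncation --- the paper formalizes the ``leading coefficient'' step via an auxiliary polynomial $g_t(s)$ and its first nonvanishing derivative $j_0$ at $s=0$, setting $\mu=(k+n)\deg(H_2)-j_0$. One small correction: your intermediate claim that $H_2(1,m_2,0)\neq 0$ is stronger than needed and can fail; what is actually required (and what your ``more precisely'' fallback and the paper's choice of $j_0$ both amount to) is only that $H_2(1,\varphi_2(t),t)\not\equiv 0$, so that it has a nonzero leading coefficient at some order of vanishing $j_0\geq 0$.
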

\begin{proof} First, let $n\in {\Bbb N}$ be such that $$g_t(s):=H_2(t^{n+k},a_0t^{n+k}+a_1t^{n+k-1}s+\cdots+a_{n+k}s^{n+k}, t^n(b_0 t^{k}+b_{1}t^{k-1}s+\cdots+b_{k}s^k),  t^ns^k)\not=0,$$
and let $j_0\in {\Bbb N}$ be the first natural number  such that $g_t^{j_0)}(0)\not=0$ ($g_t^{j)}$ the partial derivative or order $j$ w.r.t $s$). Note that clearly $j_0\in {\Bbb N}$ exists and $\deg(g_t^{j}(0))=(k+n)\deg(H_2)-j$ (if $g_t^{j)}(0)\not=0$). Now, from
\[F_3(t^{n+k},a_0t^{n+k}+a_1t^{n+k-1}+\cdots+a_{n+k}, t^n(b_0 t^{k}+b_{1}t^{k-1}+\cdots+b_{k}),  t^n)=\]

\noindent
$t^n(b_0 t^{k}+b_{1}t^{k-1}+\cdots+b_{k})H_2(t^{n+k},a_0t^{n+k}+a_1t^{n+k-1}+\cdots+a_{n+k}, t^n)-H_1(t^{n+k},a_0t^{n+k}+a_1t^{n+k-1}+\cdots+a_{n+k}, t^n)= t^n(b_0 t^{k}+b_{1}t^{k-1}+\cdots+b_{k})\left(g_t^{j_0)}(0)/j_0!+g_t^{j_0+1)}(0)/2!+\cdots+g_t^{(k+n)\deg(H_2))}(0)/((k+n)\deg(H_2)!)\right)-H_1(t^{n+k},a_0t^{n+k}+a_1t^{n+k-1}+\cdots+a_{n+k}, t^n)$
$$=\Lambda_0(b_0) t^{\mu}+\Lambda_{1}(b_0, b_{1}) t^{\mu-1}+\cdots+\Lambda_k(b_0, b_{1},\ldots,b_k) t^{\mu-k}+\sum_{j=k+1}^{\mu} \Lambda_j(b_0, b_{1},\ldots,b_k) t^{\mu-j},$$
we get that  $\mu=(k+n)\deg(H_2)-j_0$, and
$$\Lambda_0(b_0)=b_0 \rho(a_0,\ldots,a_r)-\rho_0(a_0,\ldots,a_r), \qquad \rho(a_0,\ldots,a_r)=g_{t=1}^{j_0)}(0)\not=0$$
 $$\Lambda_j(b_0, b_{1},\ldots,b_j)=b_j \rho(a_0,\ldots,a_r)-\rho_j(a_0,\ldots,a_{r+j},b_0, b_{1},\ldots,b_{j-1}),\qquad j=0,\ldots, k$$
 with $\Lambda_j\not=0,\,j=0,\ldots, k$.  Observe that since  we may write $H_1$ similarly as $H_2$, we get that in  each $\Lambda_j\not=0$ the $a_j$'s from $r$ are appearing one by one until all $a_j$'s  have appeared, at which point all the coefficients $\Lambda_j$ depend on all  $a_0,\ldots, a_{n+k}$.\\

Therefore, we  finally consider $n\in {\Bbb N}$ such that $\mu=(k+n)\deg(H_2)-j_0\geq k$, and $n+k\geq r+k$ and thus $n_0=\max\{r,\,(k+j_0)/\deg(H_2)-k\}$.
\end{proof}

\para

\begin{remark}\label{R-final}
\begin{enumerate}
\item Note that
\[F_3(t^{n+k},a_0t^{n+k}+a_1t^{n+k-1}+\cdots+a_{n+k}, t^n(b_0 t^{k}+b_{1}t^{k-1}+\cdots+b_{k}),  t^n)\]
is the numerator of the rational function
\[f_3(t^{k},a_0t^{k}+a_1t^{k-1}+\cdots a_k+a_{k+1}/t+\cdots+a_{n+k}/t^n, b_0 t^{k}+b_{1}t^{k-1}+\cdots+b_{k}).\]

\item In order to determine  $r\in {\Bbb N}$ in  Theorem \ref{T-final}, we observe that $r$ is defined by the first $r$ terms $\{a_0, a_1, \ldots, $ $a_{n+k}\}$ that appear in  $\Lambda_0$ (note that if $a_{j}=0,\,j=0,\ldots,\ell$ then $r\geq \ell$). It is easy to determine which terms these $a_j$'s, $j=1, \ldots, r$, are by simply noting that if we modify the term $a_{r+1}$, the coefficient $\Lambda_0$ does not change. For this purpose, one may proceed as follows: we consider $v_1,\ldots, v_k$ undetermined parameters and
\[g(t):=f_3(t^{k},a_0t^{k}+v_1t^{k-1}+\cdots+v_{k}, b_0 t^{k}+b_{1}t^{k-1}+\cdots+b_{k}).\]
If the leader coefficient of $g$ depends on $v_1,\ldots,v_{\ell},\,\,\,\ell< k$, then $r=\ell$. If $\ell=k$, then we consider
$g(t)$ the numerator of \[f_3(t^{k},a_0t^{k}+a_1t^{k-1}+\cdots+a_{k}+v_{k+1}/t, b_0 t^{k}+b_{1}t^{k-1}+\cdots+b_{k})\] and reason as above. Once we have $r$, we consider $n=r$ and we check weather  $\mu\geq k$. If it does not hold, we consider $n+1$ and check again the condition.\para

In any case, in the practice,  it suffices to consider a sufficiently large  $n\in {\Bbb N}$ to obtain the correct result. Observe that calculating the branch $r_2(z)$ with more terms is not a problem at all, nor is it computationally expensive, because one only needs to solve easy triangular systems by applying the results from \cite{NewImpl-As}.\\
\end{enumerate}
 \end{remark}

\para

Under the conditions outlined in Lemma \ref{L-final} and using the notation introduced therein, we derive the following theorem.

\para

\begin{theorem}\label{T-final} Let $(t^k, q_2(t))$ be an asymptote of   ${\cal C}_p$  obtained from the branch $B_p$.
The g-asymptotes of $\mathcal{C}$ are defined by the parametrizations
$\widetilde{\mathcal{Q}}(t) = (t^{k},\,  q_2(t), q_3(t))$
with $q_3(t)=b_0 t^{k}+b_{1}t^{k-1}+\cdots+b_{k},\,$ $b_i \in \mathbb{C},\,i=0,\ldots,k$ satisfying that
$$\Lambda_i(b_0,\ldots, b_i)=0,\,\,i=0,\ldots,k.$$
\end{theorem}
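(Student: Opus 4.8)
The plan is to exploit the characterization of approaching curves developed earlier, together with the explicit expansion furnished by Lemma~\ref{L-final}, in order to pin down exactly which parametrizations $\widetilde{\mathcal{Q}}(t)=(t^k,q_2(t),q_3(t))$ yield a g-asymptote of $\mathcal{C}$ at the branch $B$ lying over $B_p$. First I would recall, from Theorem~\ref{T-space-projected-branches} and its proof, that the infinity branch $B$ of $\mathcal{C}$ sitting above $B_p$ has its third component determined by the lift function via $r_3(z)=h(z,r_2(z))$, and that, after homogenizing, this is equivalent to $F_3(1,\varphi_2(t),\varphi_3(t),t)=0$ near $t=0$. Substituting $z=t^{-k}$ (so $z^{-1}=t^k$) in $r_2$ and writing $r_2$ to enough terms $a_0,\dots,a_{n+k}$, the quantity $F_3(t^{n+k},\,a_0t^{n+k}+\cdots+a_{n+k},\,t^n(b_0t^k+\cdots+b_k),\,t^n)$ is, by Remark~\ref{R-final}(1), the numerator of $f_3(t^k,\tilde r_2(t^{-k}),q_3(t))$; hence demanding that $q_3$ coincide with the truncation $\tilde r_3$ of the true branch component is equivalent to requiring that this polynomial vanish identically in $t$ up to order $\mu-k$, i.e.\ that its top $k+1$ coefficients vanish.

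The key step is then to apply Lemma~\ref{L-final}: for $n\ge n_0$ the above expression equals $\Lambda_0(b_0)t^\mu+\Lambda_1(b_0,b_1)t^{\mu-1}+\cdots+\Lambda_k(b_0,\dots,b_k)t^{\mu-k}+\sum_{j>k}\Lambda_j t^{\mu-j}$ with $\mu-k\ge 0$, where each $\Lambda_j$ is \emph{affine-linear} in $b_j$ with nonzero leading coefficient $\rho(a_0,\dots,a_r)$. Because the system $\Lambda_0=\Lambda_1=\cdots=\Lambda_k=0$ is triangular and each equation is solvable uniquely for $b_j$ in terms of $b_0,\dots,b_{j-1}$ (using $\rho\ne 0$), it has a \emph{unique} solution $(b_0,\dots,b_k)$; I would observe that these $b_j$ must then be exactly the coefficients of $\tilde r_3(t^k)$, since the true series $\varphi_3$ is the unique Puiseux solution of $F_3(1,\varphi_2(t),\varphi_3(t),t)=0$ with $\varphi_3(0)=m_3$, and truncating it produces a $q_3$ whose associated numerator has vanishing coefficients through order $\mu-k$. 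Conversely, any $(b_0,\dots,b_k)$ satisfying the system produces a $q_3$ that agrees with the non-negative-exponent part of $r_3$, hence by Remark~\ref{R-distance0} and Theorem~\ref{T-curvas-aprox} the curve parametrized by $\widetilde{\mathcal{Q}}$ approaches $\mathcal{C}$ at $B$.

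It then remains to verify that $\widetilde{\mathcal{Q}}(t)=(t^k,q_2(t),q_3(t))$ actually \emph{is} a g-asymptote, not merely an approaching curve: one must show the parametrization is proper and that the curve $\widetilde{\mathcal{C}}$ it defines is perfect with $\deg(\widetilde{\mathcal{C}})=\deg(B)$. For properness I would argue as in the discussion preceding the algorithm (cf.\ equation~(\ref{Eq-parametric-case1}) and the accompanying claim, with $k=n=\deg(B)$): the $\gcd$ condition on the exponents appearing in $q_2$ together with the presence of the component $t^k$ forces the parametrization to be invertible, so $\deg(\widetilde{\mathcal{C}})=k=\deg(B)$; since any curve approaching $\mathcal{C}$ at $B$ has degree at least $\deg(B)$ (the remark following Definition~\ref{D-degreebranch}), $\widetilde{\mathcal{C}}$ has minimal degree among approaching curves, hence is perfect, hence is a g-asymptote by Definition~\ref{D-asymptote} and Remark~\ref{R-minimal-degree}. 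The main obstacle I anticipate is the bookkeeping that links the substitution $z=t^{-k}$ (and the truncation of $r_2$ at index $n+k$ rather than at $k$) with the statement of Lemma~\ref{L-final}, namely making precise that vanishing of $\Lambda_0,\dots,\Lambda_k$ is genuinely equivalent to $q_3$ carrying exactly the non-negative-exponent terms of the true branch component $r_3$; this requires care about which $a_j$ enter each $\Lambda_j$ and about the fact that the higher coefficients $\Lambda_{k+1},\dots,\Lambda_\mu$ need \emph{not} vanish, corresponding precisely to the negative-exponent tail of $r_3$ that an asymptote is allowed to ignore.
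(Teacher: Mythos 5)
Your proposal follows essentially the same route as the paper: both rest on Lemma~\ref{L-final}'s expansion of $F_3$ evaluated on the truncated data, both compare that truncated polynomial with the exact identity $F_3(1,\varphi_2(t),\varphi_3(t),t)=0$ satisfied by the true branch (the paper phrases this as ${\cal E}_1(w)={\cal E}_2(w)+\Lambda_{\mu+1}w^{\mu+1}+\cdots$ with ${\cal E}_1\equiv 0$), and both conclude that the top $k+1$ coefficients $\Lambda_0,\dots,\Lambda_k$ must vanish, yielding a triangular system uniquely solvable for $b_0,\dots,b_k$ because $\rho(a_0,\dots,a_r)\neq 0$. The one point where you overreach is the properness claim at the end: the parametrization $(t^k,q_2(t),q_3(t))$ produced by Theorem~\ref{T-final} need \emph{not} be proper — the paper's own Example~\ref{E-method-ant-b-new} exhibits $\widetilde{\mathcal{Q}}_2(t)=(t^4,\,t^2-1/2,\,-t^4/2-t^2/4)$, which is improper, and Remark~\ref{R-infpoint3}(3) explicitly allows for this and prescribes a reparametrization step. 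The gcd normalization that guarantees properness in the construction around equation~(\ref{Eq-parametric-case1}) uses $n=\lcm(n_2,n_3)$ computed from \emph{both} components' exponents, whereas here $k$ is inherited from the plane asymptote alone, so a common factor among all exponents can survive. This does not damage the theorem — the underlying curve is unchanged by proper reparametrization, and the perfectness/degree argument then goes through for the reparametrized curve — but your assertion that invertibility is "forced" should be replaced by the observation that one may reparametrize properly if needed.
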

\begin{proof} The proof strategy revolves around the observation that for a given branch, the substitution in the implicit function $f_3(x_1,x_2,x_3)$ must converge at infinity, implying that terms with positive exponents in this substitution must be $0$ (see equation \ref{E-uno}). \\
To begin, let $B=\{(z,r_2(z),r_3(z))\in \mathbb{C}^2: \,z\in \mathbb{C},\,|z|>M\}$, $M\in \mathbb{R}^+$, and
$$ r_3(z)=z\varphi_3(z^{-1})=b_0z+b_1z^{1-N_1/N}+b_2z^{1-N_2/N}+b_3z^{1-N_3/N}
+ \cdots$$ where $N, N_i\in \mathbb{N},\,\,i\in \mathbb{N}$, $0<N_1<\cdots<N_k= N<N_{k+1}<\cdots$, and
 \[\varphi_3(z)=b_0+b_1z^{N_1/N}+b_2z^{N_2/N}+b_3z^{N_3/N}+ \cdots.\]
 We reason similarly for $r_2(z)$, and we get that
  \[\varphi_2(z)=a_0+a_1z^{N_1/N}+a_2z^{N_2/N}+a_3z^{N_3/N}+ \cdots.\]
Now, let
   \[\gamma_3(z)=\varphi_3(z^N)=b_0+b_1z^{N_1}+b_2z^{N_2}+b_3z^{N_3}+ \cdots\]
   and   $\Gamma_3(z,w)$ be the homogenization of $\gamma_3(z)$ (similarly for $\gamma_2(z)$ and  $\Gamma_2(z,w)$). We have that\\[-0.2cm]

       \noindent $F_3(z^N, z^N\gamma_2(z^{-1}), z^N\gamma_3(z^{-1}),1)=$ \\
      \noindent$F_3(z^N,\, a_0z^N+a_1z^{N-N_1}+ \cdots+a_k+a_{k+1}z^{N-N_{k+1}}+ \cdots,  b_0z^N+b_1z^{N-N_1}+ \cdots+b_k+b_{k+1}z^{N-N_{k+1}}+ \cdots,\,1)=0,$\\[0.1cm]
   \noindent
   which is equivalent to
\begin{equation}\label{E-uno}F_3(1,\Gamma_2(1,w),\Gamma_3(1,w),w^N)=\end{equation}

\noindent
$F_3(1,\,  a_0+a_1w^{N_1}+\cdots+a_k w^{N}+a_{k+1}w^{N_{k+1}}+\cdots,\, b_0+b_1w^{N_1}+\cdots+b_k w^{N}+b_{k+1}w^{N_{k+1}}+\cdots, \, w^N)=0$\\

\noindent
where $N-N_j\geq 0,\,j=1,\ldots,k$ and $N-N_j<0,\,j\geq k+1$. Let us denote $${\cal E}_1(w):=F_3(1,\Gamma_2(1,w),\Gamma_3(1,w),w^N).$$
Next, consider the truncated branch. Specifically, let  $$r_3^\star(z)=b_0z^k+b_1z^{k-1}+b_2z^{k-2}+\cdots+b_k$$ and
 $$r_2^\star(z)=a_0z^k+a_1z^{k-1}+a_2z^{k-2}+\cdots+a_k+a_{k+1}/z+\cdots+a_{k+n}/z^n$$
 where  $n\in {\Bbb N}$ is given by the maximum of $n_0$ (obtained in Lemma \ref{L-final}) and $k-\ell$ ($\ell$ is given such that $a_{j}=0,\,\,j=0,\ldots,\ell$).

\para

 Then, we consider the polynomial
$F_3(z^{k+n},R_2^\star(z,w),z^nR_3^\star(z,w), w^k z^n)$,
where $R_i^\star(z,w),\,i=2,3$ are the homogenizations of $r_i^\star(z),\,i=2,3,$, respectively; i.e.
$$R_2^\star(z,w)=a_0z^{k+n}+a_1z^{k+n-1}w+a_2z^{k+n-2}w^2+\cdots+a_kz^nw^k+a_{k+1}z^{n-1}w^{k+1}+\cdots+a_{k+n}w^{k+n}$$
$$R_3^\star(z,w)=b_0z^{k}+b_1z^{k-1}w+b_2z^{k-2}w^{2}+\cdots+b_kw^{k}.$$
Let
\begin{equation}\label{E-dos} {\cal E}_2(w):=F_3(1,R_2^\star(1,w),R_3^\star(1,w),  w^k).\end{equation}
Finally, consider the previous equality in the affine chart,
\[{\cal E}_3(z):=F_3(z^{k+n},R_2^\star(z,1),z^n R_3^\star(z,1), z^n)=\]\[\Lambda_0(b_0)z^{\mu}+\Lambda_{1}(b_0, b_{1})z^{\mu-1}+\cdots+\Lambda_k(b_0, b_{1},\ldots,b_k) z^{\mu-k}+\sum_{j=k+1}^{\mu} \Lambda_j(b_0, b_{1},\ldots,b_k) z^{\mu-j}.  \]

Now, from equations \ref{E-uno} and \ref{E-dos}, we obtain the coefficients $\Lambda_i$, which construct the asymptote $\widetilde{\mathcal{Q}}$ introduced in the theorem statement. Specifically, equations \ref{E-uno} and \ref{E-dos} can be written as
\[{\cal E}_2(w)= \Lambda_0(b_0)+
\Lambda_1(b_0,b_1)w+\cdots+\Lambda_k(b_0, b_{1},\ldots,b_k) w^{k}+\cdots+ \Lambda_{\mu}(b_0, b_{1},\ldots,b_k) w^{\mu} \]
$$0={\cal E}_1(w)={\cal E}_2(w)+\Lambda_{\mu+1}(b_0, b_{1},\ldots,b_k) w^{\mu+1}+\cdots.$$
Thus,
\[\Lambda_0(b_0)=\Lambda_1(b_0,b_1)=\cdots=\Lambda_j(b_0,b_1,\ldots,b_k)=0,\quad j=1,\ldots,k. \]
Since $n\in {\Bbb N}$ is given by the maximum of $n_0$ (obtained in Lemma \ref{L-final}) and $k-\ell$ ($\ell$ is such that $a_{j}=0,\,\,j=0,\ldots,\ell$), we get   $k+1$ linearly independent equations defining a triangular system where the undetermined coefficients are $b_0,b_1,\ldots, b_k$.\\
 \end{proof}
\para

\begin{remark}\label{R-infpoint3}
\begin{itemize}
  \item[1.] Since the infinity points are of the form $(1:m_2:m_3:0)$, we get that $\deg(q_i)\leq k,\,i=2,3.$ Thus, once one get the asymptotes of the plane curve, we obtain the degree of the asymptotes of the space curves.
    \item[2.] In Theorem \ref{T-final}, we obtain a triangular system which is trivial to solve. Furthermore, we note that  $b_0=m_3$ is a root of $\Lambda_0(b_0)=0$ if and only if $(1:m_2: m_3:0)$ is an infinity point.
   \item[3.] The output asymptote could not be proper. In this case, we can reparametrize properly using for instance the method presented in \cite{Sonia}.\\
\end{itemize}
\end{remark}

\para

Generalizing the previous theorem, one may determine   as many terms as desired  for $r_3(z)$ by simply introducing enough terms of $r_2(z)$ into the equations obtained from $f_3(z, r_2(z), r_3(z)) = 0$. More precisely, reasoning similarly as in Theorem \ref{T-final} and using the notation previously introduced, the following corollary is obtained.

\para

\begin{corollary}\label{C-final}  Let  $B_p=\{(z,r_2(z))\in {\Bbb C}^2:\,z\in {\Bbb C},\,|z|>M\}$ be an infinity branch of the plane algebraic curve ${\cal C}_p$. The corresponding branch $B=\{(z,r_2(z),r_3(z))\in \mathbb{C}^2: \,z\in \mathbb{C},\,|z|>M\}$ of $\mathcal{C}$ is defined $$r_3(z^k)=b_0 z^{k}+b_{1}z^{k-1}+\cdots+b_{k}+b_{k+1}/z+b_{k+2}/z^2+\cdots+b_{k+m}/z^m,\,$$ where $b_i \in \mathbb{C}$ satisfy  that
$$\Lambda_j(b_0, b_{1},\ldots,b_j)=b_j \rho(a_0,\ldots,a_r)-\rho_j(a_0,\ldots,a_{r+j}),\qquad j=0,\ldots, m+k$$
 $\mu-m-k\geq 0$, $r+m\leq n$ and $m+\ell+1\geq k+1$, being
\[F_3(t^{n+k},a_0t^{n+k}+a_1t^{n+k-1}+\cdots+a_{n+k}, t^{n-m}(b_0 t^{k+m}+b_{1}t^{k+m-1}+\cdots+b_{k}t^m+b_{k+1}t^{m-1}+b_{m+k}),  t^n)=\]
$$\Lambda_0(b_0) t^{\mu}+\Lambda_{1}(b_0, b_{1}) t^{\mu-1}+\cdots+\Lambda_k(b_0, b_{1},\ldots,b_{m+k}) t^{\mu-m-k}+\sum_{j=m+k+1}^{\mu} \Lambda_j(b_0, b_{1},\ldots,b_{m+k}) t^{\mu-j}.$$
\end{corollary}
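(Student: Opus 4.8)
The plan is to mimic the structure of the proof of Theorem~\ref{T-final}, but carrying the expansion of $r_2(z)$ (and correspondingly $r_3(z)$) out to more terms. First I would fix the plane branch $B_p$ and write $r_2(z)=a_0z+a_1z^{1-N_1/N}+\cdots$ as before, and posit the target series $r_3(z)$ up to the term of order $z^{-m}$; that is, I would consider the truncation $r_3^\star(z)=b_0z^k+b_1z^{k-1}+\cdots+b_k+b_{k+1}/z+\cdots+b_{k+m}/z^m$ together with a truncation $r_2^\star(z)=a_0z^k+a_1z^{k-1}+\cdots+a_k+a_{k+1}/z+\cdots+a_{k+n}/z^n$ of $r_2$ with $n$ chosen large enough (this is the role of the condition $r+m\le n$, ensuring enough $a_j$'s have entered the relevant coefficients, and $m+\ell+1\ge k+1$, accounting for leading vanishing coefficients $a_0=\cdots=a_\ell=0$). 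The key identity to establish is the displayed expansion of $F_3(t^{n+k},\,a_0t^{n+k}+\cdots+a_{n+k},\,t^{n-m}(b_0t^{k+m}+\cdots+b_{m+k}),\,t^n)$, which is exactly the $m$-shifted analogue of Lemma~\ref{L-final}: one homogenizes $f_3$, substitutes, and tracks the leading powers of $t$, obtaining coefficients $\Lambda_j(b_0,\ldots,b_j)=b_j\rho(a_0,\ldots,a_r)-\rho_j(a_0,\ldots,a_{r+j})$ with $\rho\neq 0$.

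The second step is the convergence-at-infinity argument, identical in spirit to equations~(\ref{E-uno})--(\ref{E-dos}) in the proof of Theorem~\ref{T-final}: because $B=\{(z,r_2(z),r_3(z))\}$ is an infinity branch of $\mathcal{C}$, the series $r_3(z)=h(z,r_2(z))$ (by Theorem~\ref{T-space-projected-branches}) has no positive-exponent terms beyond the $b_0z^k$ leading term structure dictated by the infinity point, so substituting into $F_3$ and passing to the affine chart forces all coefficients $\Lambda_j$ attached to nonnegative powers of $t$ to vanish. Here the bound $\mu-m-k\ge 0$ guarantees that the powers $t^{\mu},t^{\mu-1},\ldots,t^{\mu-m-k}$ are all nonnegative, so we get exactly the $m+k+1$ equations $\Lambda_j(b_0,\ldots,b_j)=0$, $j=0,\ldots,m+k$. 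Since each $\Lambda_j$ is linear in $b_j$ with the same nonzero coefficient $\rho(a_0,\ldots,a_r)$, this is a triangular (hence uniquely solvable) system, and its solution reconstructs the coefficients of $r_3(z^k)$ up to the $z^{-m}$ term.

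The main obstacle is bookkeeping the exponent shifts correctly so that the homogenization $F_3(t^{n+k},\ldots,t^{n-m}(\cdots),t^n)$ is genuinely a polynomial in $t$ and its coefficient structure lines up with Lemma~\ref{L-final}: one must verify that the extra factor $t^{n-m}$ multiplying $R_3^\star$ (rather than $t^n$ as in the theorem) exactly compensates for the $m$ negative-exponent terms now allowed in $r_3$, and that $H_1,H_2$ contribute the claimed homogeneous degrees. I would handle this by repeating the derivative-expansion device $g_t(s)$ from Lemma~\ref{L-final} verbatim, with $k$ replaced by $k+m$ throughout, checking that $j_0$ and $\mu=(k+n)\deg(H_2)-j_0$ are unchanged and that the inequalities $\mu-m-k\ge 0$, $r+m\le n$, $m+\ell+1\ge k+1$ are precisely what is needed to close the argument. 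Beyond this indexing care, no new idea is required: the corollary is a quantitative refinement of Theorem~\ref{T-final}, trading ``compute the asymptote'' for ``compute the branch to arbitrary precision,'' and the proof is the same proof run one notch further.
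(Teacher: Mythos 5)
Your proposal is correct and follows essentially the same route as the paper, which proves Corollary~\ref{C-final} only by the remark that one reasons ``similarly as in Theorem~\ref{T-final}'' after introducing enough terms of $r_2(z)$ into $f_3(z,r_2(z),r_3(z))=0$; your rerun of Lemma~\ref{L-final} with $k$ shifted by $m$, the truncation/convergence argument via the analogues of equations~(\ref{E-uno})--(\ref{E-dos}), and the resulting triangular system in $b_0,\ldots,b_{m+k}$ are exactly that argument spelled out. The bookkeeping conditions $\mu-m-k\ge 0$, $r+m\le n$ and $m+\ell+1\ge k+1$ are identified with the same roles they play in the paper.
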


\para

Below, we introduce Algorithm {\sf Improvement Asymptotes Construction-Implicit Case}, which utilizes the above results to compute the g-asymptotes of a space curve. Furthermore, we illustrate the algorithm with two examples.
\para

\begin{center}
\fbox{\hspace*{2 mm}\parbox{15.5 cm}{ \vspace*{2 mm} {\bf Algorithm
{\sf Improvement Space Asymptotes Construction.}} \vspace*{0.2cm}

\noindent {\sf Given} an irreducible real algebraic space curve $\cal C$
implicitly defined
 by two polynomials $f_1(x_1,x_2,x_3),f_2(x_1,x_2,x_3)\in {\Bbb R}[x_1,x_2,x_3]$,  the algorithm {\sf outputs} an
asymptote for each of its infinity branches.

\begin{itemize}

\item[1.] Compute the projection of $\cal C$ along
the $x_3$-axis. Let  ${\cal C}_p$ be this projection and $f^p(x_1,x_2)$ the implicit polynomial defining ${\cal C}_p$.

\item[2.] Determine the lift function $h(x_1,x_2)$ (see
Sections 2 and 3 in \cite{Bajaj}). Let $f_3(x_1,x_2,x_3)=x_3h_2(x_1,x_2)-h_1(x_1,x_2)$.

\item[3.]  For each  infinity branch, $B_p=\{(z,r_2(z))\in {\Bbb C}^2:\,z\in {\Bbb C},\,|z|>M\}$,  of ${\cal C}_p$ determine the truncated branch (apply \cite{BlascoPerezII} or \cite{NewImpl-As})
$$r_2^\star(z)=a_0z+a_1z^{(k-1)/k}+a_2z^{(k-2)/k}+\cdots+a_k+a_{k+1}z^{-1/k}+\cdots+a_{k+n}z^{-n/k},$$
where  $n\in {\Bbb N}$ is given by the maximum of $n_0$ (obtained in Lemma \ref{L-final}; see also Remark \ref{R-final}) and $k-\ell$ ($\ell$ is  such that $a_{j}=0,\,\,j=0,\ldots,\ell$).

\item[4.] For each infinity branch  of ${\cal C}_p$, consider $\widetilde{\mathcal{Q}}(t)=(t^{k},\,q_2(t),\,  q_3(t)),\,$ $q_2(t)=a_0t^k+a_1t^{k-1}+\ldots+a_k,$ where $q_3(t)=b_0 t^{k}+b_{1}t^{k-1}+\cdots+b_{k},\,$ $b_i\in \mathbb{C},\,i=1,\ldots,k$ are undetermined coefficients.
\begin{itemize}
\item[4.1.] Compute
 \[F_3(t^{n+k},a_0t^{n+k}+a_1t^{n+k-1}+\cdots+a_{n+k}, t^n(b_0 t^{k}+b_{1}t^{k-1}+\cdots+b_{k}),  t^n)=\]
$$\Lambda_0(b_0) t^{\mu}+\Lambda_{1}(b_0, b_{1}) t^{\mu-1}+\cdots+\Lambda_k(b_0, b_{1},\ldots,b_k) t^{\mu-k}+\sum_{j=k+1}^{\mu} \Lambda_j(b_0, b_{1},\ldots,b_k) t^{\mu-j}$$

\item[4.2.] Solve the triangular system of equations $$\Lambda_i(b_0,\ldots, b_i)=0,\,\,i=0,\ldots,k$$ and substitute the solutions in $\widetilde{\mathcal{Q}}(t)$. Let $\widetilde{\mathcal{Q}}(t)$ be this  parametrization.

\item[4.3.] Return each asymptote $\widetilde{\cal C}$ defined by
$\widetilde{Q}(t)$.
\end{itemize}
\end{itemize}
}\hspace{2 mm}}
\end{center}

  \para

\begin{remark}\label{R-algoritmo2} We assume that the given algebraic space curve  $\cal C$  only has points at infinity of the form $(1 : m_2 : m_3 : 0)$; otherwise, a linear change of coordinates can be applied. However, we observe that if $(0 :1 : m_3 : 0)$ is a point at infinity, the only issue arises when computing the infinite branches of  ${\cal C}_p$, since  $(0:1:0)$  is   a point at infinity (see statement 2 in Remark \ref{R-algoritmo}). In this case, a linear change of coordinates can be applied to ${\cal C}_p$ to facilitate Step 3, and it can be reversed before proceeding to Step 4.

\end{remark}
    
  \para

\begin{example}\label{E-method-ant-a}
Consider the space curve ${\cal C}$ introduced in Example \ref{E-infbranches}. We apply Algorithm  {\sf Improvement Space Asymptotes Construction.} Steps 1 and 2 are easily applied as we showed in previous examples and we have that ${\cal C}_p$ is defined by the polynomial
\[f^p(x_1,x_2)=-x_1^2 x_2^2 + 2 x_1 x_2^3 - x_2^4 + x_1^2 x_2 - x_1 x_2^2 + 2 x_1 x_2 - x_2 + 2,\] and
\[f_3(x_1,x_2)=-x_1 x_2 + x_2^2 + x_3.\]
Now, we apply step 3 for each infinity branch  of ${\cal C}_p$. Using \cite{NewImpl-As}, we have that
\[r_{2,1}(z)=z-2+\cdots,\,\,r_{2,2}(z)=z+1+\cdots,\,\,r_{2,3}(z)=0+\cdots,\,\,r_{2,2}(z)=1+\cdots.\]
We apply Remark \ref{R-final} to determine  $r_{2,j}^\star(z),\,j=1,\ldots,4$. For $r_{2,1}^\star(z)$, we have that the leader coefficient of the numerator of the rational functions $f_3(t, t+v_1,b_0t+b1)$ and $f_3(t, t+v_1+v_2/t,b_0t+b1)$ depend  on $v_1$. Hence, we deduce that $r=1$ and we consider $n=1$ (we check that $\mu\geq k=1$ and $n\geq k-\ell=1$).
  Thus, let
  \[r_{2,1}^\star(z)=z-2-7/(3z)\]
 Reasoning as above, we get that
  \[r_{2,2}^\star(z)=z+1-2/(3z),\,\,r_{2,3}(z)^\star(z)=0+0/t,\,\,r_{2,4}(z)^\star(z)=1+3/z.\]
Now, we apply step 4 for each infinity branch  of  ${\cal C}_p$. Then,
\begin{itemize}
  \item  Let $\widetilde{\mathcal{Q}_1}(t)=(t, t-2,\,  b_0t+b1).$ We compute the numerator of
  $f_3(t, t-2-7/(3t), b_0 t+b_1)$ and get that
  \[\Lambda_0=b_0 - 2,\qquad \Lambda_1=b_1 + 5/3.\]
Hence $b_0=2$ and $b_1=-5/3$ and thus, $\widetilde{\mathcal{Q}_1}(t)=(t,  t-2,\,  2t-5/3).$

  \item Let $\widetilde{\mathcal{Q}_2}(t)=(t, t+1,\,  b_0t+b1).$  We compute the numerator of
  $f_3(t, t+1-2/(3t), b_0 t+b_1)$ and get that
  \[\Lambda_0=b_0 +1,\qquad \Lambda_1=b_1 + 1/3.\]
Hence  $b_0=-1$ and $b_1=-1/3$, and $\widetilde{\mathcal{Q}_2}(t)=(t, t+1,\, -t-1/3).$

  \item Let $\widetilde{\mathcal{Q}_3}(t)=(t, 0,\,  b_0t+b1).$ Reasoning as above, we compute the numerator of
  $f_3(t, 0, b_0 t+b_1)$. We get that
  \[\Lambda_0=b_0,\qquad \Lambda_1=b_1\]
which implies that $b_0=b_1=0$. Thus, $\widetilde{\mathcal{Q}_3}(t)=(t, 0,0).$

  \item Let $\widetilde{\mathcal{Q}_4}(t)=(t, 1,\,  b_0t+b1).$ We compute the numerator of
  $f_3(t, 1+3/t, b_0 t+b_1)$, and we get that
  \[\Lambda_0= b_0 - 1,\qquad \Lambda_1=b_1 - 2.\]
  Thus $b_0=1$ and $b_1=2$, and hence $\widetilde{\mathcal{Q}_4}(t)=(t, 1,\, t+2).$
\end{itemize}
The input curve, ${\cal C}$, and its four asymptotes have been plotted in Figure \ref{F-ejemplo-asintotas}. In Figure \ref{F-ejemplo1N}, one may see the input surfaces, together with the space curve and the asymptotes.In the right side, we are plotting only the space curve and the asymptote in a larger quadrant to observe the convergence as we approach infinity. \\

   \begin{figure}[h]
   \centering
   $\begin{array}{ccc}
    \includegraphics[width=0.54\textwidth]{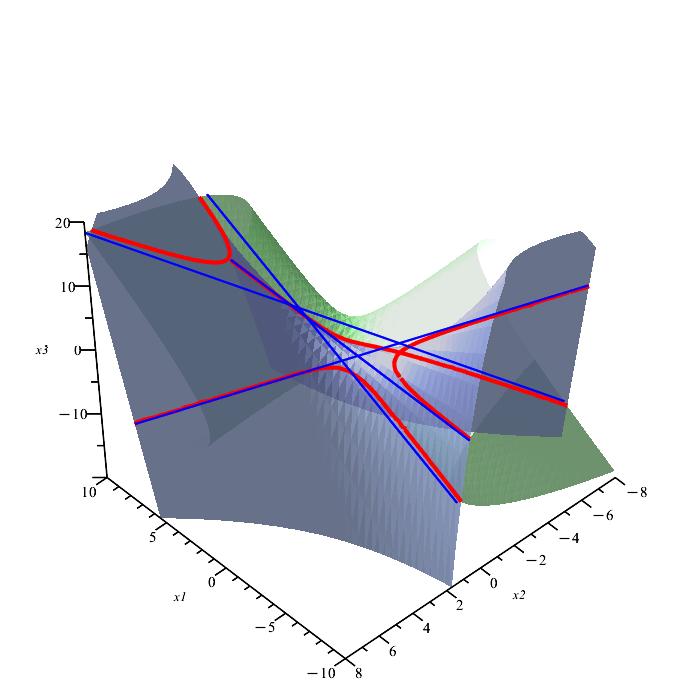}  &  \includegraphics[width=0.54\textwidth]{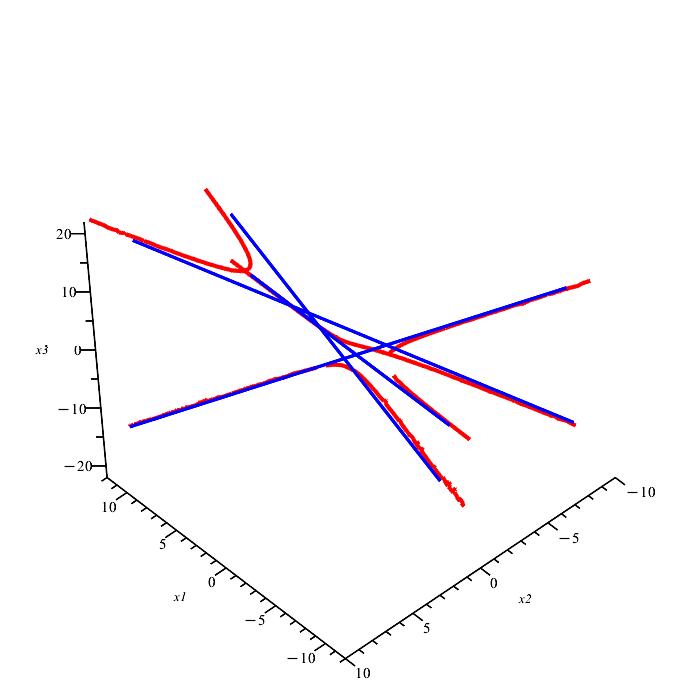}
   \end{array}$
  \caption{Input surfaces, space curve and asymptotes (left). Space curve and asymptotes (right).}\label{F-ejemplo1N}
 \end{figure}
\end{example}

\para

As we outlined in Remark \ref{R-final}, in the practice, in step 3 of the  algorithm  {\sf Improvement Space Asymptotes Construction},   it suffices to consider a sufficiently large  $n\in {\Bbb N}$ to obtain the correct result. Observe that calculating the truncated branch $r_2^{\star}(z)$ with more terms is not a problem at all, nor is it computationally expensive, because one only needs to solve easy triangular systems by applying the results from \cite{NewImpl-As}.\\

\para

\begin{example}\label{E-method-ant-b-new}
Consider the space curve ${\cal C}$ defined by the polynomials
\[f_1(x_1,x_2,x_3)=x_1 x_2^4 - x_2^5 - 2 x_1^2 x_2^2 + 4 x_1 x_2^3 - 2 x_2^4 + x_1^3 - 3 x_1^2 x_2 + 3 x_1 x_2^2 - x_2^3 - 4 x_1 x_2 + 4 x_2^2 - 1,\quad\]\[ f_2(x_1,x_2,x_3)=x_1^2 x_2 + 2 x_1 x_2 x_3 - x_2^2 x_3 + x_2^2 + x_1 - x_2 + x_3.\]We apply Algorithm  {\sf Improvement Space Asymptotes Construction.} Note that in this case, we have that  
$f^p(x_1,x_2)=f_1(x_1,x_2,x_3)$ is the implicit polynomial defining ${\cal C}_p$, and  $f_3(x_1,x_2)=f_2(x_1,x_2,x_3).$ \para

\noindent
Now, we apply step 3 for each infinity branch  of ${\cal C}_p$. Using \cite{NewImpl-As}, we have that
\[r_{2,1}(z)=z-1/z^4+\cdots,\,\,\,\,r_{2,2}(z)=-1/2+z^{1/2}+z^{-1/4}+1/8  z^{-1/2} +\cdots.\]
We apply Remark \ref{R-final} to determine  $r_{2,j}^\star(z),\,j=1,2$. For $r_{2,1}^\star(z)$, we have that the leader coefficient of numerator of $f_3(t, t+v_1,b_0t+b1)$ does not depend on $v_1$. Hence, we deduce that $r=0$ and we consider $n=0$ (we check that $\mu\geq k=1$ and $n\geq k-\ell=0$).
  Thus, let
  \[r_{2,1}^\star(z)=z\]
For $r_{2,2}^\star(z)$, we note that $\ell=2$ and thus $r\geq 2$.  In addition, the leader coefficient of the numerator of $f_3(t^4, t^2 - 1/2+1/t+1/(8t^2)+v_1/t^3,b_0t^4+b_1t^3+b_2t^2+b_3t+b_4)$ does not depend on $v_1$, and thus $r=2$. We consider $n=2$ (we check that $\mu\geq k=4$ and $n\geq k-\ell=2$), and we have that
  \[r_{2,2}^\star(z)=-1/2+z^{1/2}.\]
Now, we apply step 4 for each infinity branch  of  ${\cal C}_p$. Then,

\begin{itemize}
  \item  Let $\widetilde{\mathcal{Q}_1}(t)=(t, t,\,  b_0t+b_1).$  We compute the numerator of
  $f_3(t, t, b_0 t+b_1)$, and we get that
    \[\Lambda_0=b_0+1,\qquad \Lambda_1=b_1+1.\]
Thus $b_0=b_1=0$, and $\widetilde{\mathcal{Q}_1}(t)=(t,  t,\,  0).$
  \item Let $\widetilde{\mathcal{Q}_2}(t)=(t^4,\,   t^2 - 1/2,\,  b_0t^4+b_1t^3+b_2t^2+b_3t+b_4).$  We compute the numerator of
   $f_3(t^4, t^2 - 1/2+1/t+1/(8t^2),b_0t^4+b_1t^3+b_2t^2+b_3t+b_4)$, and we get that
    \[\Lambda_0=8 b_0 + 4,\quad \Lambda_1=8 b_1,\quad\Lambda_2=8 b_2 - 8 b_0 - 2,\quad \Lambda_3=-8 b_1 + 8 b_2,\quad\Lambda_4=-8b_2 + 8b_43 + 4b_0.\]
 Hence  $b_0=-1/2, b_1=0, b_2 =-1/4, b_3 =0, b_4 =0$, and then, $\widetilde{\mathcal{Q}_2}(t)=(t^4,\,   t^2 - 1/2,\, -1/2 t^4-1/4t^2).$ Note that in this case $\widetilde{\mathcal{Q}_2}$ is not proper but it can be properly reparametrized as $(t^2,\,   t - 1/2,\, -1/2 t^2-1/4t)$ (see statement 3 in Remark \ref{R-infpoint3}).
\end{itemize}

In Figure \ref{F-ejemplo2N}, one may see the input surfaces, together with the space curve and the asymptotes. In the right side, we are plotting only the space curve and the asymptote in a larger quadrant to observe the convergence as we approach infinity. \\

   \begin{figure}[h]
   \centering
   $\begin{array}{ccc}
    \includegraphics[width=0.54\textwidth]{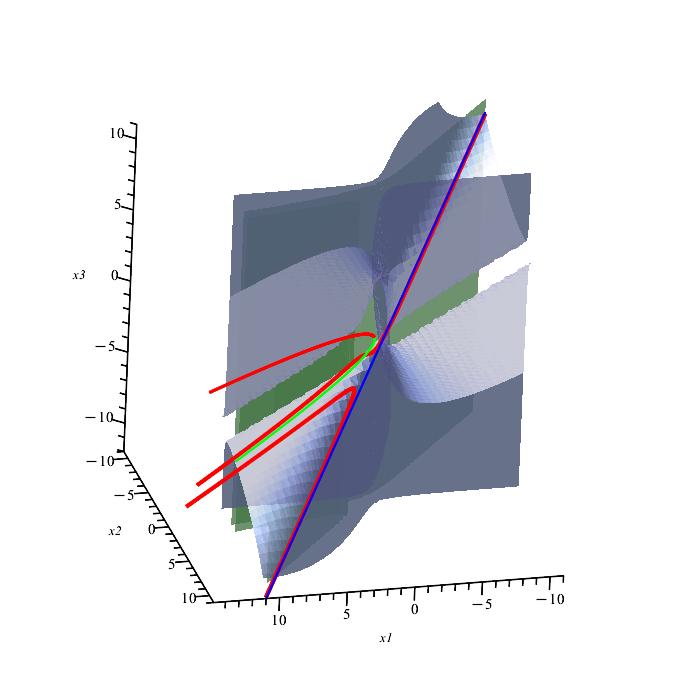}  &  \includegraphics[width=0.54\textwidth]{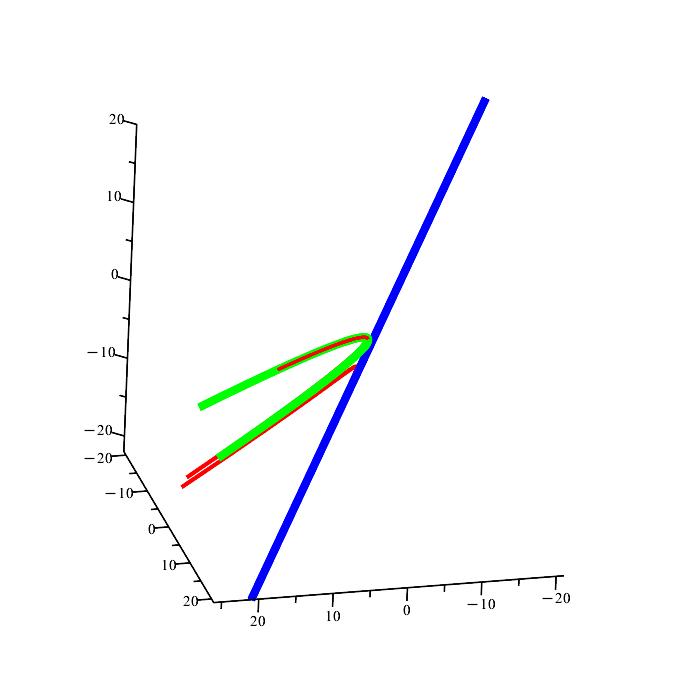}
   \end{array}$
  \caption{Input surfaces, space curve and asymptotes (left). Space curve and asymptotes (right).}\label{F-ejemplo2N}
 \end{figure}
\end{example}
 
\para

The method described above may be trivially adapted for dealing with algebraic curves $\cal C$ in the $n$-dimensional space defined by irreducible polynomials $f_{j}(x_1,x_2,\ldots, x_n),\,j=1,\ldots,n-1$. For instance, if $n=4$, and we have a curve $\cal C$ defined by the irreducible polynomials $f_j(x_1,x_2,x_3,x_4),\,j=1,2,3$, we compute the asymptotes of the  curves defined by the projection over a valid projection direction (under a linear change of coordinates, one may assume that  the $x_n$-axis serves as a valid projection direction). Afterwards, we  consider the corresponding lift-function to obtain the asymptotes of the input curve.

\para

Algorithms  \textsf{Asymptotes Construction-Implicit Case} and \textsf{Improvement Asymptotes Construction-Implicit Case}  allow  us to easily obtain all the generalized asymptotes of an algebraic  curve implicitly defined. However,  one has to determine the roots of some given equations, which may entail certain difficulties if algebraic numbers are involved. For this purpose, we use the notion of  \textit{conjugate points} (see  \cite{SWP}), which will help us to overcome this problem. The idea is to
collect points whose coordinates depend algebraically on all conjugate roots of the same
 irreducible polynomial, say $m(t)\in \mathbb{R}[t]$. This will imply that the computations on such families can be carried
out by using the defining polynomial $m(t)$ of these algebraic numbers. That is, one applies the formulae presented in Theorem \ref{T-final}, but  modulo $m(t)$, i.e.  we use the polynomial  $m(t)$ to carry out the arithmetic by computing polynomial remainders (see e.g. \cite{NewImpl-As} or \cite{NewPara-As}).  We should note that, according to Corollary \ref{C-final}, the presence of algebraic numbers occurs because they have already appeared during the computation of  $r_2$. In the following, we illustrate this process with a simple example that clearly conveys the idea.

\para

\begin{example}\label{E-conjugate}
Consider the space curve ${\cal C}$ defined by the polynomials
\[f_1(x_1,x_2,x_3)= 2x_1^3 + x_1x_3^2 + x_3^3 + 4x_31,\quad f_2(x_1,x_2,x_3)=-x_1^2 - x_3^2 + x_2.\]
We apply Algorithm  {\sf Improvement Space Asymptotes Construction.} In this case, we have that  
$$f^p(x_1,x_2)=-2x_1^6 + x_1^4x_2 + 8x_1^4 - 4x_1^2x_2^2 - 16x_1^2x_2 + x_2^3 - 16x_1^2 + 8x_2^2 + 16x_2$$ is the implicit polynomial defining ${\cal C}_p$, and  $$f_3(x_1,x_2)=(-x_1^2 + x_2 + 4)x_3 + x_1^3 + x_1x_2.$$

\noindent
Now, we apply step 3 for each infinity branch  of ${\cal C}_p$. Using \cite{NewImpl-As}, we have that
\[r_{2}(z)= z^2\lambda + 4\lambda^2/29 - 36\lambda/29 - 48/29 +(384\lambda^2/841  - 1368 \lambda/841  + 32/841)z^{-2} +\cdots,\]
where $m(\lambda)=0$, and $m$ is the irreducible polynomial $m(t)=t^3 - 4t^2 +t - 2$ (we had to consider a change of coordinates because the point at infinity of the projective plane curve is (0:1:0); see Remark \ref{R-algoritmo2}).  We  determine  $r_{2}^\star(z)$ reasoning as in the previous examples  to get that 
  \[r_{2}^\star(z)=z^2\lambda + 4\lambda^2/29 - 36\lambda/29 - 48/29 \]
and now,  we apply step 4. Then, let $\widetilde{\mathcal{Q}}(t)=(t,\,  t^2\lambda + 4\lambda^2/29 - 36\lambda/29 - 48/29,\,  b_0t^2+b_1t+b_2).$  We compute the numerator of
   $f_3(t,t^2\lambda + 4\lambda^2/29 - 36\lambda/29 - 48/29,  b_0t^2+b_1t+b_2)$, and we get that
    \[\Lambda_0=29 b_0\lambda - 29b_0,\quad \Lambda_1=29b_1\lambda - 29b_1 + 29\lambda + 29,\quad\Lambda_2=4b_0\lambda^2 - 36b_0\lambda + 29b_2\lambda + 68b_0 - 29b_2.\]
 Hence  $b_0=0, b_1=-(\lambda + 1)/(\lambda - 1), b_2=0$, and then  $$\widetilde{\mathcal{Q}}(t)=(t,\,   t^2\lambda + 4\lambda^2/29 - 36\lambda/29 - 48/29,\,-(\lambda + 1)/(\lambda - 1)t)$$
 where $m(\lambda)=0$, and $m(t)=t^3 - 4t^2 +t - 2$. Applying resultants,   and we get that this asymptote is defined by $g_1(x_1,x_2,x_3)=g_2(x_1,x_2,x_3)=0$, where 
 \[g_1=48778x_1^6 - 24389x_1^4x_2 - 195112x_1^4 + 97556x_1^2x_2^2 + 390224x_1^2x_2 - 24389x_2^3 + 538240x_1^2 - 195112x_2^2 - 484416x_2 - 430592,\]
 \[g_2=8x_1^3 + 4x_1x_3^2 + 4x_3^3.\]
In Figure \ref{F-ejemplo3N}, one may see the input surfaces, together with the space curve and the asymptotes. In the right side, we are plotting only the space curve and the asymptote in a larger quadrant to observe the convergence as we approach infinity. \\

   \begin{figure}[h]
   \centering
   $\begin{array}{ccc}
    \includegraphics[width=0.54\textwidth]{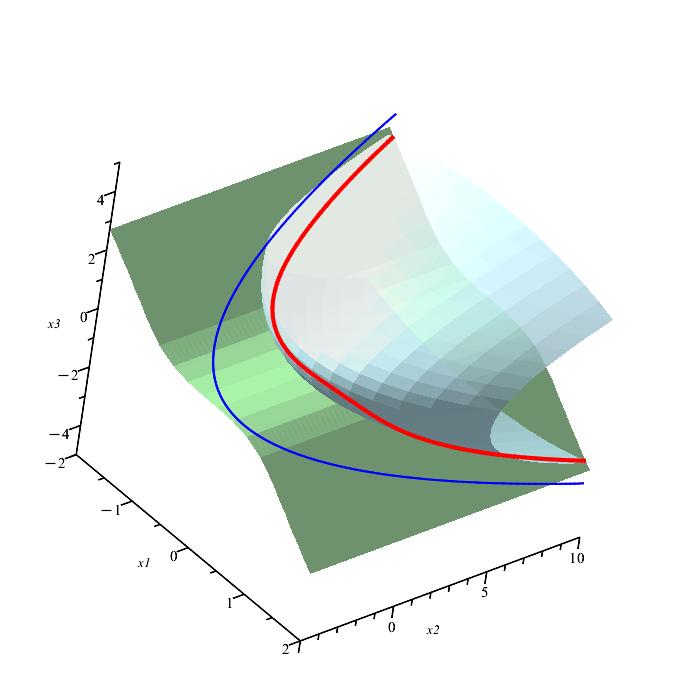}  &  \includegraphics[width=0.54\textwidth]{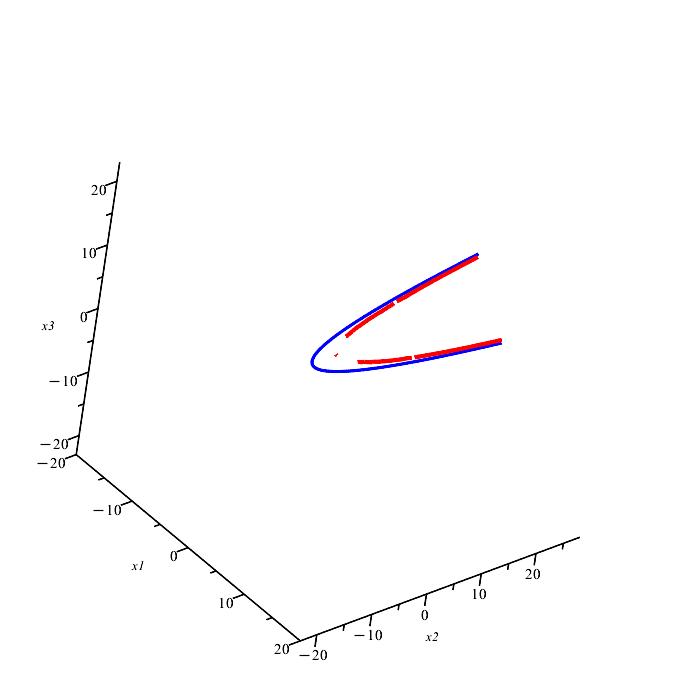}
   \end{array}$
  \caption{Input surfaces, space curve and asymptotes (left). Space curve and asymptotes (right).}\label{F-ejemplo3N}
 \end{figure}
\end{example}

\section{Concluding Remarks}\label{Sect4}

In this manuscript, we introduce essential tools for analyzing the asymptotic behavior of real algebraic space curves implicitly  defined. Specifically, we remind the concepts of "infinity branches" and "generalized asymptotes," explore their properties, and outline algorithms for their computation. While these notions were previously introduced for implicit real algebraic plane curves, their treatment in the context of space curves necessitates distinct approaches. Our contributions in this paper include:
\begin{enumerate}
\item Introduction of fundamental notions and results tailored to algebraic space curves, including the definitions of infinity branches and approaching curves. These concepts represent direct extensions from those established for plane curves (cf. Sections 3 and 4 in \cite{paper1}).
\item Presentation of a method for computing infinity branches in space, achieved by leveraging reductions from the spatial domain to the plane. This approach   requires repetitive computation of the entire infinite branch using Puiseux series.
\item Development of a new procedure for computing asymptotes of implicitly defined space curves. More precisely, we characterize the existence of the asymptotes   of $\mathcal{C}$ from the asymptotes of ${\mathcal{C}^p}$ and  we provide an effective algorithm for computing the asymptotes of $\mathcal{C}$ once the asymptotes of ${\mathcal{C}^p}$ are determined. We do not need to compute infinity branches and Puiseux series and instead, we only need to identify the solutions of a triangular system of equations derived from the implicit polynomial.

  The novel approach could be applicable to algebraic curves in higher-dimensional spaces.

\end{enumerate}
Our future work is based on three main points. On one hand, although some previous results have been presented in \cite{Surf-As}, there is still a need for a deep analysis for the characterization and computation of asymptotes in the case of surfaces. Therefore, we aim to extend or generalize the results to the case of surfaces. On the other hand, the generation of families of algebraic curves with given asymptotes. In this regard, it is important to note the insight that can be shed on the problem of determining varieties resulting from the intersection of two surfaces. Finally, we note that the case of space curves defined by more than two polynomials can also be addressed using the techniques employed in this work. However, it requires a more in-depth study and analysis, including the generalization of the construction of a proper lift function when the curve is not derived from a complete intersection  (see \cite{Bajaj}).

\section{Acknowledgements}

First  author is partially supported by  Ministerio de Ciencia, Innovaci\'on y Universidades - Agencia Estatal de Investigaci\'on/PID2020-113192GB-I00 (Mathematical Visualization: Foundations, Algorithms and Applications).
First  author  belongs to the Research Group ASYNACS (Ref.CCEE2011/R34). Second author is partially supported by National Natural Science Foundation of China under grant 12371384 and Fundamental Research Funds for the Central Universities.
The author R. Magdalena Benedicto is partially supported by the State Plan for Scientific and Technical Research and Innovation of the Spanish MCI (PID2021-127946OB-I00). \\

\end{document}